\begin{document}
\newtheorem{thm}{Theorem}[section]
\newtheorem{lem}[thm]{Lemma}
\newtheorem{prop}[thm]{Proposition}
\newtheorem{cor}[thm]{Corollary}
\theoremstyle{definition}
\newtheorem{ex}[thm]{Example}
\newtheorem{rem}[thm]{Remark}
\newtheorem{prob}[thm]{Problem}
\newtheorem{thmA}{Theorem}
\renewcommand{\thethmA}{}
\newtheorem{defi}[thm]{Definition}
\renewcommand{\thedefi}{}
\input amssym.def
\long\def\alert#1{\smallskip{\hskip\parindent\vrule%
\vbox{\advance\hsize-2\parindent\hrule\smallskip\parindent.4\parindent%
\narrower\noindent#1\smallskip\hrule}\vrule\hfill}\smallskip}
\def\ff{\frak}
\def\Spec{\mbox{\rm Spec}}
\def\type{\mbox{ type}}
\def\Hom{\mbox{ Hom}}
\def\rank{\mbox{ rank}}
\def\Ext{\mbox{ Ext}}
\def\Ker{\mbox{ Ker}}
\def\Max{\mbox{\rm Max}}
\def\End{\mbox{\rm End}}
\def\l{\langle\:}
\def\r{\:\rangle}
\def\Rad{\mbox{\rm Rad}}
\def\Zar{\mbox{\rm Zar}}
\def\Supp{\mbox{\rm Supp}}
\def\Rep{\mbox{\rm Rep}}
\def\cal{\mathcal}
\title[CHMV-algebras]{Compact Hausdorff MV-algebras: Structure, Duality and Projectivity}
\thanks{2010 Mathematics Subject Classification.
 03G20, 06D35, 06E15, 06D50}
\thanks{\today}
\author{Jean B Nganou}
\address{Department of Mathematics and Statistics, University of Houston-Downtown,
Houston, TX 77001} \email{nganouj@uhd.edu}
\begin{abstract} It is proved that the category $\mathbb{EM}$ of extended multisets is dually equivalent to the category $\mathbb{CHMV}$ of compact Hausdorff MV-algebras with continuous homomorphisms, which is in turn equivalent to the category of complete and completely distributive MV-algebras with homomorphisms that reflect principal maximal ideals. Urysohn-Strauss's Lemma, Gleason's Theorem, and projective objects are also investigated for topological MV-algebras.
 \vspace{0.20in}\\
{\noindent} Key words: MV-algebra, compact MV-algebras, multiset, dually equivalent, maximal ideal, completely distributive, extremally disconnected, projective MV-algebra.
\end{abstract}
\maketitle
\section{Introduction}
    One notable duality from the theory of Boolean algebras is the one between sets with functions and atomic complete Boolean algebras with complete homomorphisms. Given that MV-algebras which constitute the algebraic counterpart of \L ukasiewicz infinite-valued logic \cite{CC} are extensions of Boolean algebras, there have been several extension of the above dually to notable subvarieties of MV-algebras. For instance, it has been established that the category of locally finite MV-algebras is dually equivalent to the category of generalized multisets \cite{CDM}, a duality that was very recently extended further to weakly locally finite MV-algebras \cite{CM}, and that the category of profinite MV-algebras and homomorphisms that reflect principal maximal ideals is dually equivalent to the category of multisets \cite{jbn}. Multisets are defined in combinatorics as pairs $\langle X, \sigma\rangle$, where $X$ is a set and $\sigma:X\to \mathbb{N}$ assigning to each $x$ its multiplicity $\sigma(x)$. There have been several variants and generalizations of the concept of multisets (see for e.g., \cite{Bli, CDM, H, Mo}).\par
A topological MV-algebra is an MV-algebra $(A,\oplus, \neg, 0)$ together with a topology $\tau$ with respect to which the operations $\oplus$ and $\neg$ are continuous. In addition if the topology $\tau$ has the property $\mathcal{P}$ (such as compactness, Hausdorff, connectedness, etc...), one refers to $A$ as a $\mathcal{P}$ MV-algebras. For basic properties of topological MV-algebras, we refer the reader to \cite{Hoo, web}.\par
The main results of the paper consist on one hand to establish a duality between the category of compact Hausdorff MV-algebras and continuous homomorphisms and the category of extended multisets and their morphisms, and describe projective objects of the first category on the other. \par
One of the main ingredients used in obtaining the duality is the characterization of principal maximal ideals of compact Hausdorff MV-algebras as being exactly the compact maximal ideals. The category of compact Hausdorff MV-algebras overlaps significantly with that of locally finite MV-algebras, and also that of weakly locally finite MV-algebras, and contains the category of Stone MV-algebras as a full subcategory. The duality obtained here extends (at least partially) those established in \cite{CDM, CM, jbn}.\par
Some of the most prominent characteristics of normal spaces are about the existence of continuous maps subject to some type of separation, the best known being the Urysohn's Lemma. Since compact Hausdorff are normal, many versions of this lemma have been considered in the context of topological algebras, where continuous functions would also be required to preserve the algebras structures. For instance, the Urysohn-Strauss Lemma for compact Hausdorff distributive lattices (see for e.g., \cite[Lem. VII.1.14, Thm. VII.1.14]{Jo}. Since the main objects of study in this article are compact Hausdorff MV-algebras, one would like to know if the of the continuous lattice homomorphisms that exist, there is some that preserve the MV-algebra structures. We obtain that the answer is positive if and only if the compact Hausdorff MV-algebra is a compact Hausdorff Boolean algebra, i.e., a powerset algebra\cite{GH}.\par
Extremally disconnected topological spaces are spaces in which the closures of open subsets are open\cite{GL}. They are also known as Stonean spaces since they are exactly the Stone spaces for which the Boolean algebra of clopen subsets is complete\cite[Prop. III. 3.4]{Jo}. Above all, these spaces are best known for being the projective objects in the category of compact Hausdorff spaces with continuous maps as proved by Gleason \cite[Thm. 2.5]{GL}. In the present context, we show that the extremally disconnected topological MV-algebras are finite MV-algebras, and do not coincide with the projective objects in the category of compact Hausdorff MV-algebras with continuous homomorphisms. Indeed, we describe all projective compact Hausdorff MV-algebras and obtain that they are exactly the CHMV-algebras having the 2-element Boolean algebra as a continuous homomorphic image. Projective objects have been investigated in many varieties of distributive lattices\cite{BA, BH}, including some varieties of MV-algebras \cite{DNG, DNGL}.\\
We set up the notations and terminologies used in the paper.\\
The prototype of MV-algebra is the unit real interval $[0,1]$ equipped with the operation of truncated addition $x\oplus y=\text{min}\{x+y,1\}$, negation $\neg x=1-x$, and the identity element $0$. For each integer $n\geq 2$, $\L_n=[0,1]\cap \mathbb{Z}\dfrac{1}{n-1}$ is a sub-MV-algebra of $[0,1]$ (the \L ukasiewicz's chain with $n$ elements), and up to isomorphism every finite MV-chain is of this form. For convenience and uniformity we will also denote $[0,1]$ by $\L_{\infty}$.

We assume familiarity with MV-algebras, in particular their definition, homomorphisms, prime ideals and maximal ideals\cite{C2}.\\
The set of natural numbers extended by $\infty$ will be denoted by $\overline{\mathbb{N}}$, that is $\overline{\mathbb{N}}=\mathbb{N}\cup \{\infty\}$.\par
The main object of study in this work are compact Hausdorff MV-algebras (CHMV-algebras, for short), which are MV-algebras equipped with a compact and Hausdorff topology with respect to which all MV-operations are continuous. These are known up to algebraic and topological isomorphism (see for e.g., \cite[Theorem 2.2]{jbn2} or \cite[Theorem 2.5]{web}) to be of the form $A:=\displaystyle\prod_{x\in X}\L_{n_x}$ with $n_x\in \overline{\mathbb{N}}$ for all $x\in X$. We set $X^A_{\text{fin}}=\{x\in X:n_x\in\mathbb{N}\}$ and $X^A_{\infty}:=\{x\in X:n_x=\infty\}$. We will simply write $X_{\text{fin}}$ and $X_{\infty}$ when there is no risk of confusion.  It follows that $A\cong A_{\text{fin}} \times A_{\infty}$, where $A_{\text{fin}} = \prod_{x\in X^A_{\text{fin}}}\L_{n_x}$ and $A_{\infty}= [0,1]^{X^A_{\infty}}$.

For each $x\in X$, $p_x:A\to L_{n_x}$ denotes the natural projection. In addition $ker p_x$ will be denoted by $M_x$. In particular, it follows that each $M_x$ is a maximal ideal of $A$. It is easy to see that $$\displaystyle\oplus_{x\in X}\L_{n_x}:=\left\{f\in A:f(x)=0\; \text{for all, but finitely many}\;  x\in X \right \}$$ is an ideal of $A$. 
\section{Maximal ideals of CHMV-algebras}
For every MV-algebra $A$, let $\mathcal{H}(A)$ denotes the set of MV-algebra homomorphisms from $A$ into $[0,1]$ and $Max(A)$ denotes the set of maximal ideals of $A$. It is well known \cite{CDM} that $\chi\mapsto ker\chi$ defines a one-to-one correspondence between $\mathcal{H}(A)$ and $Max(A)$, where $ker \chi=\{a\in A: \chi(a)=0\}$. \\

Recall that a principal ideal of an MV-algebra $A$ is any ideal $I$ that is generated by a single element, that is there exists $a\in A$, such that $I=\langle a\rangle$. It is well known that $x\in \langle a\rangle$ if and only if $x\leq na$ for some integer $n\geq 1$. 

While our main class of MV-algebras of interest is the class of class of compact Hausdorff MV-algebras, we consider a slightly larger class since some of our results in this section hold in the larger class. The class in question is that of MV-algebras that isomorphic to direct products of simple MV-chains or sub-MV-algebras of $[0,1]$. We shall called such MV-algebras \textit{strictly semisimple}. By the definition, each strictly semisimple MV-algebra has the form $A:=\prod_{x\in X}A_x$ with $A_x$ sub-MV-algebra of $[0,1]$ for each $x\in X$. Then $A$ has a natural Hausdorff topology, namely the product topology $\mathfrak{p}$ of the natural topologies on the $A_x$'s and $(A, \mathfrak{p})$ is a topological MV-algebra. Moreover, this topology is compact if and only if each $A_x$ is either a finite MV-chain or $[0,1]$. In this case $\mathfrak{p}$ is the only compact Hausdorff topology making $A$ a topological MV-algebra \cite[Lemma 2.1]{jbn2}, and for this reason, unless otherwise specified, whenever a topology is used on a CHMV-algebra, it would be referring to this topology.


The following result generalizes the corresponding one obtained in \cite[Lemma 3.1]{jbn} for Stone MV-algebras i.e., topological MV-algebras whose topology is Stone (compact Hausdorff and zero-dimensional).
\begin{prop}\label{principals}
Let $A:=\displaystyle\prod_{x\in X}A_x$ be a strictly semisimple MV-algebra. For every maximal ideal $M$ of $A$, the following conditions are equivalent.
\begin{enumerate}
\item $M$ is principal;
\item There exists a unique $x_0\in X$, such that $M=ker p_{x_0}$;
\item $M$ does not contain $\oplus_{x\in X}A_x$;
\item $\bigvee M\in M\cap B(A)$;
\end{enumerate}
\end{prop}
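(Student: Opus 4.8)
The plan is to establish the four conditions as a single cycle $(1)\Rightarrow(3)\Rightarrow(2)\Rightarrow(4)\Rightarrow(1)$. Throughout I would fix, for each $x\in X$, the element $e_x\in A$ given by $e_x(x)=1$ and $e_x(y)=0$ for $y\ne x$; these lie in $\oplus_{x\in X}A_x$, are idempotent (hence lie in $B(A)$), satisfy $e_x\wedge e_y=0$ for $x\ne y$, and satisfy $g\wedge e_x=0$ whenever $g(x)=0$. I will use freely that $z\in\langle a\rangle$ iff $z\le na$ for some $n\ge 1$, that $\chi\mapsto ker \chi$ is a bijection $\mathcal{H}(A)\to Max(A)$, that each projection $p_x$ is onto the \emph{simple} algebra $A_x$ (so that each $ker p_x$ is maximal), and that MV-homomorphisms preserve the lattice operations.

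For $(3)\Rightarrow(2)$ I would pick $f\in\oplus_{x\in X}A_x$ with $f\notin M$, let $\chi$ be the homomorphism with kernel $M$, and note that $f$ is dominated by a finite join $e_{x_1}\vee\cdots\vee e_{x_k}$; since $0<\chi(f)\le\bigvee_i\chi(e_{x_i})$ in the chain $[0,1]$ and the $\chi(e_{x_i})$ are pairwise meet-zero, exactly one $\chi(e_{x_0})$ is nonzero, and then $\chi(g)=\chi(g)\wedge\chi(e_{x_0})=0$ for every $g$ with $g(x_0)=0$; hence $ker p_{x_0}\subseteq M$, and maximality of $ker p_{x_0}$ forces $M=ker p_{x_0}$, with $x_0$ unique because $e_{x_0}\notin M$ while $e_{x_0}\in ker p_{x_1}$ for $x_1\ne x_0$. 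For $(2)\Rightarrow(4)$: if $M=ker p_{x_0}$, then $\bigvee M$ exists and equals the $\{0,1\}$-valued element $u$ vanishing exactly at $x_0$ (it dominates every member of $M$, and is dominated by any upper bound because all $e_x$ with $x\ne x_0$ lie in $M$); thus $u\in M\cap B(A)$. For $(4)\Rightarrow(1)$: if $u:=\bigvee M\in M\cap B(A)$, then $\langle u\rangle=\{z:z\le u\}$ since $u$ is idempotent, $u\in M$ gives $\langle u\rangle\subseteq M$, and $u=\bigvee M$ gives $M\subseteq\{z:z\le u\}=\langle u\rangle$, so $M=\langle u\rangle$ is principal.

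The real work is $(1)\Rightarrow(3)$, which I would argue by contradiction: suppose $M=\langle a\rangle$ is a maximal (hence proper) ideal with $\oplus_{x\in X}A_x\subseteq M$. Then each $e_x\le n_x a$ gives $a(x)\ge 1/n_x>0$ for all $x$; if in addition $\inf_x a(x)>0$ then some multiple $na$ equals $1$, contradicting properness, so $\inf_x a(x)=0$. Because $a$ is strictly positive, $\{x:a(x)<1/k\}$ is infinite for each $k$, so I can choose pairwise distinct $y_1,y_2,\dots$ with $a(y_k)<4^{-k}$ and define $g\in A$ by $g(y_k)=\min\{k\,a(y_k),1\}$ (a finite MV-sum of copies of $a(y_k)$, hence an element of $A_{y_k}$, in fact equal to $k\,a(y_k)$ since $k\,4^{-k}<1$) and $g(y)=0$ otherwise. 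Then $g\notin\langle a\rangle$, because $z\le na$ would force $k\le n$ at $y_k$ for all large $k$; yet $\langle a,g\rangle$ is proper, since for each $n,m$ the element $na\oplus mg$ takes the value $(n+mk)\,a(y_k)<(n+mk)\,4^{-k}<1$ at $y_k$ for $k$ large. Hence $\langle a\rangle\subsetneq\langle a,g\rangle\subsetneq A$, contradicting the maximality of $M$. The one genuinely delicate point is precisely this construction: since nothing is assumed about the individual subalgebras $A_x$, the witness $g$ must be manufactured from $a$ itself using only MV-operations, with the geometric rate $4^{-k}$ tuned so that "$g\notin\langle a\rangle$" and "$1\notin\langle a,g\rangle$" hold simultaneously.
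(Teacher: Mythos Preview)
Your cycle is correct, and it differs from the paper's argument chiefly in the hard step. For the crux, the paper proves $(1)\Rightarrow(2)$ directly by exploiting \emph{primality} of $M$: assuming $a(x)>0$ for all $x$ and $\inf_x a(x)=0$, it partitions $X$ into two pieces $X',X''$ each with infimum $0$, builds $f,g$ with $f\wedge g=a$, and lets primality force $f\in M$ or $g\in M$, which contradicts $\inf_{X'}a=0$ or $\inf_{X''}a=0$. Your $(1)\Rightarrow(3)$ instead manufactures an explicit $g$ with $M=\langle a\rangle\subsetneq\langle a,g\rangle\subsetneq A$, contradicting \emph{maximality}; this is more hands-on (a sequence chosen with geometric decay, and estimates to balance ``$g\notin\langle a\rangle$'' against ``$1\notin\langle a,g\rangle$'') but entirely valid, and has the virtue that the value $g(y_k)=k\,a(y_k)$ is built from $a$ by MV-operations, so it automatically lands in the unknown subalgebra $A_{y_k}$. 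The paper's primality trick is shorter and avoids any countable choice or growth bookkeeping; your approach is more constructive. Your $(3)\Rightarrow(2)$ also differs: the paper argues the contrapositive via $\neg b_x\in M$, while you read off $x_0$ from the homomorphism $\chi$ with $\ker\chi=M$; both are fine. One small slip: the displayed equality ``$\chi(g)=\chi(g)\wedge\chi(e_{x_0})=0$'' is not what you mean---you should write $0=\chi(g\wedge e_{x_0})=\chi(g)\wedge\chi(e_{x_0})$ and then use $\chi(e_{x_0})>0$ in the chain $[0,1]$ to conclude $\chi(g)=0$. Finally, your $(4)\Rightarrow(1)$ is a clean shortcut (the paper instead passes through $(4)\Rightarrow(3)\Rightarrow(2)\Rightarrow(1)$).
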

\begin{proof}
$(1)\Rightarrow (2)$: Suppose that $M$ is principal, then $M=\langle a\rangle $ for some $a\in A$. We claim that there exists $x_0\in X$ with $a(x_0)=0$. By contradiction suppose that $a(x)\ne 0$ for all $x\in X$, and let $\alpha=\text{Inf}\{a(x):x\in X\}$.\\
 If $\alpha>0$, then there exits an integer $m\geq 1$ such that $1\leq m\alpha$, which implies $1\leq ma(x)$ for all $x\in X$. It follows that $ma=1$, and so $M=A$, which would be a contradiction. Therefore $\alpha=0\notin \{a(x):x\in X\}$. One can write $X$ as the disjoint union of of two sets $X'$ and $X''$ such that $\text{Inf}\{a(x):x\in X'\}=\text{Inf}\{a(x):x\in X''\}=0$. Consider $f,g\in A$ defined by:
$$f(x)=
\left\{\begin{array}{ll}
  1 & ,\ \ \mbox{if} \ \ x\in X'\\
 a(x)&  ,\ \ \mbox{if} \ \ x\in X''
\end{array}\right.\; \; \; 
\text{and}\; \; \; 
g(x)=
\left\{\begin{array}{ll}
  a(x) & ,\ \ \mbox{if} \ \ x\in X'\\
 1&  ,\ \ \mbox{if} \ \ x\in X''
\end{array}\right.$$
Then $f\wedge g=a$, in particular $f\wedge g\in M$. Since $M$ is prime, as every maximal ideal is, then $f\in M$ or $g\in M$. Assume $f\in M=\langle a\rangle$, then there exists an integer $r\geq 1$ such that $f\leq ra$. Therefore, $1\leq ra(x)$ for all $x\in X'$, and so $1/r\leq a(x)$ for all $x\in X'$. This contradicts the fact that $\text{Inf}\{a(x):x\in X'\}=0$. In a similar argument, $g\in M$ would contradict the fact that $\text{Inf}\{a(x):x\in X''\}=0$.\\
Thus $a(x_0)=0$ for some $x_0\in X$. For every $f\in M=\langle a\rangle$, there exists $k\geq 1$ such that $f\leq ka$, and it follows that $f(x_0)=0$ for all $f\in M$. Hence, $M\subseteq M_{x_0}$. Since $M$ and $M_{x_0}$ are maximal, then $M=M_{x_0}=\ker p_{x_0}$. The uniqueness is clear.\\
$(2)\Rightarrow (1)$: This is clear as each $M_{x_0}$ is principal and generated by $f$, where $$f(x)=
\left\{\begin{array}{ll}
  0 & ,\ \ \mbox{if} \ \ x=x_0\\
 1&  ,\ \ \mbox{if} \ \ x\ne x_0
\end{array}\right.$$

$(2) \Rightarrow (3)$: Suppose that there exists a unique $x_0\in X$ such that $M=\ker p_{x_0}$. Consider $f\in A$ defined by $$f(x)=
\left\{\begin{array}{ll}
  1 & ,\ \ \mbox{if} \ \ x=x_0\\
 0&  ,\ \ \mbox{if} \ \ x\ne x_0
\end{array}\right. $$
Then $f\in \displaystyle\oplus_{x\in X}\L_{n_x}$ and $f\notin M$.\\
$(3)\Rightarrow (2)$: Suppose that for all $x\in X$, $M\ne M_x$. For each $x\in X$, let $b_x\in A$ be defined by $$b_x(t)=
\left\{\begin{array}{ll}
  0 & ,\ \ \mbox{if} \ \ t=x\\
 1&  ,\ \ \mbox{if} \ \ t\ne x
\end{array}\right.$$
Then for every $x\in X$, since $M_x=\langle b_x\rangle$, then $b_x\notin M$ and since $M$ is maximal, by \cite[Proposition 1.2.2]{C2} there exists an integer $k_x\geq 1$ such that $\neg k_xb_x=\neg b_x\in M$. Now, let $a\in \oplus_{x\in X}\L_{n_x}$, then there exists $k\geq 1$ and $x_1, x_2, \ldots, x_k\in X$ such that $a\leq \neg b_{x_1}\oplus \neg b_{x_2}\oplus \cdots \oplus \neg b_{x_k}$. Hence, as $M$ is a lower set, $a\in M$ and $\oplus_{x\in X}\L_{n_x}\subseteq M$ as needed.\\
$(2)\Rightarrow (4)$: For each $x\in X$, $\bigvee M_x=b_x$ where $b_x(x)=0$ and $b_x(t)=1$ if $t\ne x$. So $\bigvee M_x\in M_x\cap B(A)$. \\
$(4)\Rightarrow (3)$: By contradiction suppose that $\bigvee M\in M\cap B(A)$ and $\oplus_{x\in X}\L_{n_x}\subseteq M$. Then $1=\bigvee \oplus_{x\in X}\L_{n_x}\leq \bigvee M$. So $1=\bigvee M\in M$, which contradicts the fact that $M$ is a proper ideal of $A$.
\end{proof}
\begin{rem}
The equivalence $(1) \Leftrightarrow (2)$ of the preceding Proposition can also be derived from the fact that the maximal spectral space of $A$ is the Stone \v{C}ech compactification $\beta X$ of $X$ endowed with the discrete topology. Indeed $\mathfrak{m}\mapsto \mathfrak{u}_{\mathfrak{m}}:==\{S\subseteq X: f^{-1}([0,\epsilon))\subseteq S \; \text{for some}\; f\in \mathfrak{m} \; \text{and}\; \epsilon>0\}$ defines a homeomorphism from $\text{Max}(A)$ onto $\beta X$ and $\mathfrak{m}$ is principal if and only if $\mathfrak{u}_{\mathfrak{m}}$ is a principal.
\end{rem}
We deduce characterizations of non-principal maximal ideals.
\begin{cor}
Let $A:=\displaystyle\prod_{x\in X}A_x$ be a strictly semisimple MV-algebra and $M$ a maximal ideal of $A$. The following assertions are equivalent.
\begin{enumerate}
\item $M$ is non-principal;
\item $\oplus_{x\in X}A_x\subseteq M$
\item $M$ is dense in $A$, i.e., $\overline{M}=A$.
\end{enumerate}
\end{cor}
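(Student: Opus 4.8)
The plan is to read off the equivalence $(1)\Leftrightarrow(2)$ directly from Proposition~\ref{principals} and to prove $(2)\Leftrightarrow(3)$ by a short topological argument using the product topology $\mathfrak{p}$. For the first part, condition $(3)$ of Proposition~\ref{principals} is precisely the negation of $\oplus_{x\in X}A_x\subseteq M$, and it is shown there to be equivalent to $M$ being principal; negating both sides gives at once that $M$ is non-principal if and only if $\oplus_{x\in X}A_x\subseteq M$.

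For $(2)\Rightarrow(3)$, I would show that every basic $\mathfrak{p}$-open subset of $A$ meets $M$. Fix $f\in A$ and a basic open neighbourhood $U$ of $f$, determined by finitely many coordinates $x_1,\dots,x_k\in X$ together with open sets $V_i\ni f(x_i)$ in $A_{x_i}$. Define $g\in A$ by $g(x_i)=f(x_i)$ for $i=1,\dots,k$ and $g(x)=0$ otherwise. Then $g$ vanishes off a finite set, so $g\in\oplus_{x\in X}A_x\subseteq M$, while $g\in U$ by construction. Hence $M$ is dense, i.e.\ $\overline{M}=A$.

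For $(3)\Rightarrow(2)$ I would argue by contraposition: if $\oplus_{x\in X}A_x\not\subseteq M$, then by Proposition~\ref{principals} $M$ is principal, so $M=\ker p_{x_0}=p_{x_0}^{-1}(\{0\})$ for some $x_0\in X$. Since $p_{x_0}\colon A\to A_{x_0}$ is continuous and $\{0\}$ is closed in the chain $A_{x_0}$ (a subspace of $[0,1]$), the ideal $M$ is topologically closed; being also a proper ideal, $\overline{M}=M\neq A$, so $M$ is not dense.

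The argument is essentially routine, and I do not expect any real obstacle; the only point needing a moment's care is the last one, namely recognizing that a principal maximal ideal of a strictly semisimple MV-algebra is exactly the preimage of a closed singleton under a coordinate projection, hence closed — which is what forces a dense maximal ideal to be non-principal.
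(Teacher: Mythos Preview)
Your proof is correct. The paper states this corollary without proof, treating it as an immediate consequence of Proposition~\ref{principals}; your argument supplies exactly the details one would expect, reading $(1)\Leftrightarrow(2)$ off from the contrapositive of Proposition~\ref{principals}$(1)\Leftrightarrow(3)$ and handling the topological condition $(3)$ by the natural density/closedness argument in the product topology~$\mathfrak{p}$.
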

It also follows that the representation of a strictly semisimple MV-algebra is unique up to permutation of factors.
\begin{cor}\label{uniquerepr}
Let $A:=\prod_{x\in X}A_x$ and $B:=\prod_{y\in Y}B_y$ be two strictly semisimple MV-algebras. If $A$ is isomorphic to $B$, then there exists a bijection $\tau:X\to Y$ such that for all $x\in X$, $A_x=B_{\tau (x)}$.
\end{cor}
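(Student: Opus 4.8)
The plan is to reconstruct the index set $X$ and the factors $A_x$ from the bare isomorphism type of $A$, using Proposition~\ref{principals} to identify the $A_x$'s as the simple quotients of $A$ by its principal maximal ideals. First I would fix an MV-algebra isomorphism $\varphi\colon A\to B$ and note that it induces a lattice isomorphism between the ideal lattices, so $M\mapsto\varphi(M)$ is a bijection from $Max(A)$ onto $Max(B)$; this bijection preserves principality, since $M=\langle a\rangle$ forces $\varphi(M)=\langle\varphi(a)\rangle$. By the equivalence $(1)\Leftrightarrow(2)$ of Proposition~\ref{principals}, together with the uniqueness clause in $(2)$, the assignment $x\mapsto\ker p_x$ is a bijection from $X$ onto the set of principal maximal ideals of $A$; writing $q_y\colon B\to B_y$ for the natural projections, $y\mapsto\ker q_y$ is the analogous bijection for $B$. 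Composing these three bijections produces a bijection $\tau\colon X\to Y$ determined by $\varphi(\ker p_x)=\ker q_{\tau(x)}$ for every $x\in X$.

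It then remains to prove $A_x=B_{\tau(x)}$. Since $p_x$ is onto $A_x$ with kernel $\ker p_x$, the first isomorphism theorem and $\varphi$ give MV-algebra isomorphisms $A_x\cong A/\ker p_x\cong B/\varphi(\ker p_x)=B/\ker q_{\tau(x)}\cong B_{\tau(x)}$. The delicate point — and the one I expect to be the main obstacle — is upgrading this abstract isomorphism to a literal equality of subsets of $[0,1]$. Here I would use that $A_x$ and $B_{\tau(x)}$ are simple, hence each has $\{0\}$ as its only maximal ideal, so by the correspondence $\chi\mapsto\ker\chi$ between $\mathcal{H}(\,\cdot\,)$ and $Max(\,\cdot\,)$ recalled at the beginning of the section, each admits a \emph{unique} homomorphism into $[0,1]$, which must be its inclusion. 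Given an isomorphism $\psi\colon A_x\to B_{\tau(x)}$, the composite of $\psi$ with the inclusion $B_{\tau(x)}\hookrightarrow[0,1]$ is then forced to equal the inclusion $A_x\hookrightarrow[0,1]$, so $\psi(a)=a$ for all $a\in A_x$; since $\psi$ is surjective onto $B_{\tau(x)}$ this yields $A_x=\psi(A_x)=B_{\tau(x)}$.

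Everything outside that last rigidity argument is routine bookkeeping with the bijections furnished by Proposition~\ref{principals}. The only external input is the uniqueness of the embedding of a simple MV-algebra into $[0,1]$ (a consequence of Hölder's theorem for MV-algebras, or equivalently of the cited correspondence applied to a simple algebra), which is precisely what converts ``isomorphic'' into ``equal'' and completes the proof.
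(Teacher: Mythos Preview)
Your proposal is correct and follows essentially the same route as the paper: both construct $\tau$ by transporting principal maximal ideals through $\varphi$ via Proposition~\ref{principals}, then use the first isomorphism theorem to get $A_x\cong B_{\tau(x)}$, and finally upgrade this to equality using the rigidity of sub-MV-algebras of $[0,1]$. The only cosmetic difference is that the paper cites this last rigidity fact from \cite[Cor.~7.2.6]{C2}, whereas you derive it directly from the $\chi\mapsto\ker\chi$ bijection applied to a simple algebra.
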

\begin{proof}
Let $A:=\displaystyle\prod_{x\in X}A_x$ and $B=\displaystyle\prod_{y\in Y}B_y$, and $\varphi:A\to B$ be an isomorphism. For each $x\in X, y\in Y$, denote by $p_x$ (resp. $q_y$) the natural projection $A\to A_x$ (resp. $B\to B_y$). Let $PMax(A)$ and 
$PMax(B)$ denote the set of principal maximal ideals of $A$ and $B$ respectively. Let $x\in X$, then $M_x\in PMax(A)$, so $\varphi(M_x)\in PMax(B)$, and by Proposition \ref{principals}, there exists a unique $y\in Y$ such that $\varphi(M_x)=ker q_y$.
 If one defines $\tau:X\to Y$ by $\tau (x)=y$, it is readily seen that $\tau$ is a bijection.  For each $x\in X$, consider the map $a\mapsto \varphi(a)/\varphi(M_x)$, which is clearly a surjective homomorphism from $A\to B/\varphi(M_x)$, whose kernel is 
 $M_x$. By the homomorphism theorem, we obtain that $A_x\cong A/M_x\cong B/ker q_y\cong B_y$. Thus, by Corrolary \cite[Corollary 7.2.6]{C2}, $A_x=B_{\tau(x)}$ for all $x\in X$ as required.
\end{proof}
It follows from the preceding facts that each strictly semisimple is completely determined by its principal simple quotients, i.e., its quotients by principal maximal ideals.
\begin{cor}
An MV-algebra is strictly semisimple if and only if it is isomorphic to its principal simple quotients.
\end{cor}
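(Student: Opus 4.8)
The statement is to be read as saying that the canonical homomorphism
$$\varphi_A\colon A\longrightarrow \prod_{M\in PMax(A)} A/M,\qquad a\longmapsto (a/M)_{M},$$
where $PMax(A)$ denotes the set of principal maximal ideals of $A$, is an isomorphism exactly when $A$ is strictly semisimple. (Each $A/M$ with $M$ maximal is a simple MV-algebra, hence a sub-MV-algebra of $[0,1]$ by the classical characterization of simple MV-algebras; this is why these quotients are the ``principal simple quotients''.) The plan is to treat the two implications separately, the ``if'' direction being immediate and the ``only if'' direction being essentially a bookkeeping exercise built on Proposition~\ref{principals}.

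For the ``if'' direction I would simply observe that if $A$ is isomorphic to $\prod_{M\in PMax(A)} A/M$ — whether via $\varphi_A$ or via any isomorphism — then $A$ is a direct product of simple MV-algebras, each of which is (isomorphic to) a sub-MV-algebra of $[0,1]$ and in particular an MV-chain; by the very definition of the class, $A$ is then strictly semisimple.

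For the ``only if'' direction, write $A=\prod_{x\in X}A_x$ with each $A_x$ a sub-MV-algebra of $[0,1]$, let $p_x\colon A\to A_x$ be the projections and $M_x=\ker p_x$. First I would invoke Proposition~\ref{principals} (the equivalence $(1)\Leftrightarrow(2)$ together with its uniqueness clause) to conclude that $x\mapsto M_x$ is a bijection of $X$ onto $PMax(A)$, so that reindexing along it identifies $\prod_{M\in PMax(A)} A/M$ with $\prod_{x\in X} A/M_x$. Next, the homomorphism theorem applied to each surjection $p_x$ yields isomorphisms $A/M_x\xrightarrow{\ \sim\ }A_x$, $a/M_x\mapsto a(x)$, whose product identifies $\prod_{x\in X} A/M_x$ with $\prod_{x\in X}A_x=A$. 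Finally I would trace an arbitrary $a\in A$ through $\varphi_A$ followed by these two identifications, obtaining $a\mapsto (a/M_x)_{x\in X}\mapsto (a(x))_{x\in X}=a$; hence $\varphi_A$ becomes the identity of $A$ and in particular is an isomorphism.

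The only genuine content, and the place to be careful, is the use of Proposition~\ref{principals} to pin down $PMax(A)$ \emph{exactly} as $\{M_x:x\in X\}$ (injectivity of $x\mapsto M_x$ coming from the uniqueness in part~$(2)$, surjectivity from $(1)\Rightarrow(2)$ and $(2)\Rightarrow(1)$), and then to verify that $\varphi_A$, after the canonical reindexing and the quotient isomorphisms, is literally the identity on $\prod_{x\in X}A_x$ rather than merely some isomorphism. Everything else is routine, and for the converse one additionally uses only the standard fact, part of the assumed background on MV-algebras, that simple MV-algebras are, up to isomorphism, exactly the sub-MV-algebras of $[0,1]$.
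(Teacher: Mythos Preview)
Your proposal is correct and unpacks precisely the argument the paper leaves implicit: the corollary is stated there without proof, as an immediate consequence of Proposition~\ref{principals} (which identifies $PMax(A)$ with $\{M_x:x\in X\}$) together with the first isomorphism theorem giving $A/M_x\cong A_x$. Your care in checking that the canonical map $\varphi_A$ itself, and not merely some abstract isomorphism, realizes the identification is a useful addition, though the paper's phrasing (``isomorphic to its principal simple quotients'') does not insist on this stronger conclusion.
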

For CHMV-algebras, the characterizations of principal maximal ideals obtained in Proposition \ref{principals} can be expanded by a topological property.
\begin{prop}\label{principal}
A maximal ideal of a CHMV-algebra is principal if and only if it is compact.
\end{prop}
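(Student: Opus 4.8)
The plan is to reduce the topological statement to Proposition~\ref{principals} and the Corollary characterizing non-principal maximal ideals, so that the equivalence falls out of two standard point-set facts: a closed subspace of a compact space is compact, and a compact subspace of a Hausdorff space is closed. Throughout, $A=\prod_{x\in X}\L_{n_x}$ carries its unique compact Hausdorff topology, namely the product topology $\mathfrak{p}$ of the natural topologies on the factors; thus $A$ is a compact Hausdorff space and each projection $p_x\colon A\to\L_{n_x}$ is continuous.

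For the forward implication I would assume $M$ is principal. By Proposition~\ref{principals} this forces $M=\ker p_{x_0}$ for some (unique) $x_0\in X$, that is $M=p_{x_0}^{-1}(\{0\})$. Since $\{0\}$ is closed in the Hausdorff space $\L_{n_{x_0}}$ and $p_{x_0}$ is continuous, $M$ is closed in $A$; being a closed subspace of the compact space $A$, it is compact. (Equivalently, $M=\{0\}\times\prod_{x\ne x_0}\L_{n_x}$ is a product of compact spaces, so Tychonoff's theorem applies directly.)

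For the converse I would assume $M$ is compact. Because $A$ is Hausdorff, a compact subspace is closed, so $\overline{M}=M$; and since $M$ is a proper ideal, $\overline{M}=M\ne A$. Now I invoke the Corollary following Proposition~\ref{principals}: every non-principal maximal ideal of $A$ is dense in $A$, i.e.\ has closure equal to $A$. Since $\overline{M}\ne A$, $M$ cannot be non-principal, hence $M$ is principal, which completes the equivalence.

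I do not anticipate a genuine obstacle: the substance of the statement has already been established in Proposition~\ref{principals} and its Corollary, and the rest is bookkeeping. The only points worth a line of justification are that the topology in play is indeed the compact Hausdorff product topology (so that $A$ is compact and the $p_x$ are continuous) and the two elementary compactness/separation facts cited above.
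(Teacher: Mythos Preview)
Your argument is correct. The forward direction is identical to the paper's. For the converse, the paper also begins with ``$M$ compact in a Hausdorff space, hence closed,'' but then finishes differently: instead of invoking the density Corollary, it appeals to \cite[Theorem VII.1.6]{Jo} to conclude that a closed subset of a compact Hausdorff lattice is closed under directed joins, observes that $\oplus_{x\in X}\L_{n_x}$ is a directed set with supremum $1$, and derives $1\in M$ from the hypothesis $\oplus_{x\in X}\L_{n_x}\subseteq M$. Your route is the more economical one here: since the Corollary has already packaged the statement ``non-principal $\Rightarrow$ dense'' (which only needs that the finitely supported elements are dense in the product and that each $M_{x_0}$ is closed), you avoid the external citation to Johnstone entirely and the proof becomes purely internal to the paper. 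The paper's approach, on the other hand, makes explicit the lattice-theoretic mechanism (closedness forces closure under directed suprema), which is informative in its own right but not needed once the Corollary is in hand.
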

\begin{proof}
Let $A:=\displaystyle\prod_{x\in X}\L_{n_x}$ ($2\leq n_x\leq \infty$) be a CHMV-algebra. If $M$ is a principal maximal ideal of $A$, then by Proposition \ref{principals}, $M=\ker p_{x_0}=p_{x_0}^{-1}(\{0\})$, and $p_{x_0}$ is continuous and $\L_{n_{x_0}}$ is Hausdorff, then $M$ is closed in $A$. But since $A$ is compact, then $M$ is compact.\\
Conversely, suppose that $M$ is a compact maximal ideal of $A$. It is enough by Proposition \ref{principals} to prove that $\oplus_{x\in X}\L_{n_x}\nsubseteq M$. Assume by contradiction that $\oplus_{x\in X}\L_{n_x}\subseteq M$. Since $M$ is compact and $A$ is Hausdorff, then $M$ is closed and by \cite[Theorem VII.1.6]{Jo}, $M$ is closed under directed joins. Note that $\oplus_{x\in X}\L_{n_x}$ is a directed subset of $M$, so $1={\bf \setminus\kern-.95ex\nnearrow} (\oplus_{x\in X}\L_{n_x})\in M$. That is $1\in M$, which contradicts the fact that $M$ is a proper ideal.
\end{proof}
For CHMV-algebras, Corollary \ref{uniquerepr} can be strengthened as follows.
\begin{cor} \label{uniquerep}
The representation of a CHMV-algebras as product of complete MV-chains is unique up to a permutation of factors. In other words, if $$\displaystyle\prod_{x\in X}\L_{n_x}\cong\displaystyle\prod_{y\in Y}L_{m_y}$$ 
Then, there exists a bijection $\tau :X\to Y$ such that $m_{\tau(x)}=n_x$ for all $x\in X$.
\end{cor}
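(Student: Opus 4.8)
The plan is to read this off from Corollary~\ref{uniquerepr}. First I would note that every CHMV-algebra is in particular strictly semisimple: in $A=\displaystyle\prod_{x\in X}\L_{n_x}$ each factor $\L_{n_x}$ with $n_x\in\overline{\mathbb{N}}$ is a simple MV-chain realised as a sub-MV-algebra of $[0,1]$, namely the finite \L ukasiewicz chain $[0,1]\cap\mathbb{Z}\frac{1}{n_x-1}$ when $n_x\in\mathbb{N}$ and $[0,1]=\L_\infty$ when $n_x=\infty$. Hence an isomorphism $\prod_{x\in X}\L_{n_x}\cong\prod_{y\in Y}L_{m_y}$ satisfies the hypotheses of Corollary~\ref{uniquerepr}, which yields a bijection $\tau:X\to Y$ with $\L_{n_x}=L_{m_{\tau(x)}}$ for all $x\in X$, the equality being literal equality of subsets of $[0,1]$.

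It then remains only to pass from the set equality $\L_{n_x}=L_{m_{\tau(x)}}$ to the equality of indices $n_x=m_{\tau(x)}$. This is immediate because $n\mapsto\L_n$ is injective on $\overline{\mathbb{N}}$: for $n\in\mathbb{N}$ the chain $\L_n$ has exactly $n$ elements, whereas $\L_\infty=[0,1]$ is uncountable, so equal \L ukasiewicz chains must have equal indices. Combining the two observations gives $m_{\tau(x)}=n_x$ for every $x\in X$.

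I expect no genuine obstacle here; the only point deserving a moment's attention is that the conclusion ``$A_x=B_{\tau(x)}$'' of Corollary~\ref{uniquerepr} already encodes that the defining integers agree, so the present statement is essentially a convenient specialisation. If one preferred a self-contained, topological argument, one could instead use that the compact Hausdorff topology on a CHMV-algebra is unique, so that any MV-isomorphism $\varphi$ between CHMV-algebras is automatically a homeomorphism; it therefore carries compact maximal ideals to compact maximal ideals, which by Proposition~\ref{principal} are precisely the principal ones, hence restricts to a bijection between $\{\ker p_x:x\in X\}$ and $\{\ker q_y:y\in Y\}$ (Proposition~\ref{principals}); the homomorphism theorem then identifies $\L_{n_x}\cong A/\ker p_x\cong B/\ker q_{\tau(x)}\cong L_{m_{\tau(x)}}$, and the fact that pairwise non-isomorphic finite chains (and $[0,1]$) are distinguished by their indices finishes the job.
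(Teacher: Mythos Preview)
Your proposal is correct and follows exactly the approach the paper intends: the paper gives no explicit proof of this corollary, merely introducing it with ``For CHMV-algebras, Corollary~\ref{uniquerepr} can be strengthened as follows,'' so the argument is precisely the specialisation you describe---apply Corollary~\ref{uniquerepr} and pass from $\L_{n_x}=L_{m_{\tau(x)}}$ to $n_x=m_{\tau(x)}$ via injectivity of $n\mapsto\L_n$. Your optional topological route is a nice self-contained alternative but is not needed here.
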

\begin{prop}
Let $A$, $B$ be two isomorphic CHMV-algebras. Then $A_{\text{fin}} \cong B_{\text{fin}}$ and $A_{\infty} \cong B_{\infty}$.  In particular, if $A$ is a Stone MV-algebra if and only if $B$ is a Stone MV-algebra.
\end{prop}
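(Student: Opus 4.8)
The plan is to deduce everything directly from Corollary~\ref{uniquerep}. First I would write $A\cong\prod_{x\in X}\L_{n_x}$ and $B\cong\prod_{y\in Y}\L_{m_y}$ as products of complete MV-chains, with $n_x,m_y\in\overline{\mathbb N}$. Since $A\cong B$ (and recall that, the compact Hausdorff topology on a CHMV-algebra being unique, an algebraic isomorphism is automatically a topological one), Corollary~\ref{uniquerep} supplies a bijection $\tau:X\to Y$ with $m_{\tau(x)}=n_x$ for every $x\in X$.

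The key observation is that $\tau$ must respect the partition of the index sets into finite and infinite indices: for $x\in X$ one has $n_x\in\mathbb N$ iff $m_{\tau(x)}\in\mathbb N$, so $\tau$ restricts to bijections $X^A_{\text{fin}}\to X^B_{\text{fin}}$ and $X^A_\infty\to X^B_\infty$. Re-indexing the corresponding sub-products along these restricted bijections then gives $A_{\text{fin}}=\prod_{x\in X^A_{\text{fin}}}\L_{n_x}\cong\prod_{y\in X^B_{\text{fin}}}\L_{m_y}=B_{\text{fin}}$, and in the same way $A_\infty\cong B_\infty$.

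For the last assertion I would argue that a CHMV-algebra is a Stone MV-algebra precisely when its (unique) product topology is zero-dimensional, and that this happens exactly when $X^A_\infty=\emptyset$, i.e.\ when $A_\infty$ is the trivial MV-algebra: each $\L_n$ with $n$ finite is discrete and a product of zero-dimensional spaces is zero-dimensional, whereas any product having $[0,1]$ as a factor is not totally disconnected and hence (being compact Hausdorff) not zero-dimensional. Combining this with the isomorphism $A_\infty\cong B_\infty$ from the previous step shows $A_\infty$ is trivial iff $B_\infty$ is, and therefore $A$ is a Stone MV-algebra iff $B$ is.

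I do not expect any substantial obstacle here; the statement is essentially a bookkeeping consequence of Corollary~\ref{uniquerep}. The only point needing a little care is the topological characterization of Stone MV-algebras among CHMV-algebras, where one uses the standard fact that a compact Hausdorff space is zero-dimensional if and only if it is totally disconnected together with the fact that the presence of an $[0,1]$-factor already destroys total disconnectedness.
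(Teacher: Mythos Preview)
Your proposal is correct and follows essentially the same route as the paper: both arguments invoke Corollary~\ref{uniquerep} to obtain the bijection $\tau$ with $m_{\tau(x)}=n_x$, observe that $\tau$ restricts to bijections $X^A_{\text{fin}}\to X^B_{\text{fin}}$ and $X^A_\infty\to X^B_\infty$, and conclude. The only difference is cosmetic: for the Stone assertion the paper simply cites \cite[Theorem~2.3]{jbn2} to identify Stone MV-algebras as those with $X_\infty=\emptyset$, whereas you supply a short direct topological argument to the same effect; your parenthetical remark about algebraic isomorphisms being automatically topological is not actually needed, since Corollary~\ref{uniquerep} is purely algebraic.
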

\begin{proof}
Suppose that $A:=\displaystyle\prod_{x\in X}\L_{n_x}$ and $B=\displaystyle\prod_{y\in Y}\L_{m_y}$, and $\varphi:A\to B$ an isomorphism. By Corollary \ref{uniquerep} there exists a bijection $\tau :X\to Y$ such that $m_{\tau(x)}=n_x$ for all $x\in X$. It follows that $\tau: X_{\text{fin}}\to Y_{\text{fin}}$ is a bijection and $m_{\tau(x)}=n_x$ for all $x\in X_{\text{X}}$. Therefore, $A_{\text{fin}} \cong B_{\text{fin}}$. It is also true that $A_{\infty} \cong B_{\infty}$ since $\tau:X_{\infty}\to Y_{\infty}$ is also a bijection.

If $A$ is a Stone MV-algebra, then by \cite[Theorem 2.3]{jbn2} $A:=\displaystyle\prod_{x\in X}\L_{n_x}$ with $2\leq n_x<\infty$ for all $x\in X$. Therefore, $X_{\infty}=\emptyset$ and it follows that $Y_{\infty}=\emptyset$. Hence, $B$ is a Stone MV-algebra.
\end{proof}
\begin{prop}\label{comp-h}
Let $A=\prod_{x\in X}\L_{n_x}$, $B=\prod_{y\in Y}\L_{m_y}$ be two CHMV-algebras and $\varphi: A\to B$ be a homomorphism. \\
Then the following conditions are equivalent:
\begin{itemize}
\item[(1)] For every $y\in Y$, there exists a unique $x\in X$ such that $q_y\circ \varphi=p_x$, where $p_x:A\to \L _{n_x}$, $q_y:B\to \L_{m_y}$ be the natural projections;
\item[(2)] $\varphi$ reflects principal maximal ideals (i.e., if $M$ is a principal maximal ideal of $B$, then $\varphi^{-1}(M)$ is a principal maximal ideal of $A$);
\item[(3)] $\varphi$ is complete (i.e., $\varphi$ preserves arbitrary suprema and infima);
\item[(4)] $\varphi$ is continuous.
\end{itemize}
\end{prop}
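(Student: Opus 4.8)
The plan is to prove the cyclic chain $(1)\Rightarrow(4)\Rightarrow(3)\Rightarrow(2)\Rightarrow(1)$, leaning on Proposition \ref{principals} and on the bijection $\chi\mapsto\ker\chi$ between $\mathcal{H}(A)$ and $\Max(A)$. Throughout I read the equality $q_y\circ\varphi=p_x$ in $(1)$ as an identity of homomorphisms into $[0,1]$, i.e., as $\varphi^{-1}(\ker q_y)=\ker p_x$; with that reading it is harmless that the nominal codomains $\L_{m_y}$ and $\L_{n_x}$ need not coincide. For $(1)\Rightarrow(4)$: because $B$ carries the product topology, $\varphi$ is continuous as soon as each $q_y\circ\varphi$ is, and by $(1)$ every $q_y\circ\varphi$ is a projection $p_x$, which is continuous.

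For $(4)\Rightarrow(3)$ I would test completeness against nets of finite joins and finite meets. Given a family $(a_i)_{i\in I}$ in $A$, consider the net $\bigl(\bigvee_{i\in F}a_i\bigr)_F$ indexed by the finite subsets $F\subseteq I$ ordered by inclusion. In each coordinate $x$ it is an increasing net in $\L_{n_x}$ with supremum $\bigl(\bigvee_i a_i\bigr)(x)$, and an increasing net in $\L_{n_x}$ converges to its supremum (monotone convergence when $\L_{n_x}=[0,1]$, eventual stabilization when $\L_{n_x}$ is finite); since convergence in a product space is convergence in each coordinate, the net converges to $\bigvee_i a_i$ in $A$. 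Applying the continuous homomorphism $\varphi$, which preserves finite joins, the image net $\bigl(\bigvee_{i\in F}\varphi(a_i)\bigr)_F$ converges to $\varphi\bigl(\bigvee_i a_i\bigr)$; but this image net is again increasing, hence converges to $\bigvee_i\varphi(a_i)$ by the same remark in $B$, so Hausdorffness forces $\varphi\bigl(\bigvee_i a_i\bigr)=\bigvee_i\varphi(a_i)$. The order-dual argument gives preservation of infima.

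For $(3)\Rightarrow(2)$: let $M$ be a principal maximal ideal of $B$ and put $N=\varphi^{-1}(M)$. The induced embedding $A/N\hookrightarrow B/M$ lands in a simple MV-algebra, so $A/N$ is simple and $N$ is a maximal ideal of $A$. If $N$ were non-principal, then $\oplus_{x\in X}\L_{n_x}\subseteq N$ by Proposition \ref{principals}, whence $\bigvee N=1$ (the finitely supported elements already join to $1$); applying $(3)$, $1=\varphi(1)=\varphi\bigl(\bigvee N\bigr)=\bigvee\varphi(N)\le\bigvee M$, which is impossible since $\bigvee M\in M$ by Proposition \ref{principals}(4) and $M$ is proper. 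Hence $N$ is principal, so $\varphi$ reflects principal maximal ideals. For $(2)\Rightarrow(1)$: fix $y\in Y$; then $\ker q_y$ is a principal maximal ideal of $B$, so $\varphi^{-1}(\ker q_y)=\ker(q_y\circ\varphi)$ is a principal maximal ideal of $A$, hence equals $\ker p_x$ for a unique $x\in X$ by Proposition \ref{principals}; since $q_y\circ\varphi$ and $p_x$ are homomorphisms $A\to[0,1]$ with the same kernel, the correspondence $\mathcal{H}(A)\leftrightarrow\Max(A)$ forces $q_y\circ\varphi=p_x$.

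The implication carrying the real weight is $(4)\Rightarrow(3)$: the point is to recognize that finite joins and finite meets are the right nets to test against and that their convergence in the product topology is precisely factorwise monotone convergence; once the topology is tied to the order in this way, the other implications reduce to bookkeeping with Proposition \ref{principals}. I would also record a shortcut for $(4)\Rightarrow(2)$ that sidesteps completeness entirely: $\ker q_y$ is closed in $B$, so $\varphi^{-1}(\ker q_y)$ is a closed, hence compact, maximal ideal of $A$, and is therefore principal by Proposition \ref{principal}; this establishes $(1)\Leftrightarrow(2)\Leftrightarrow(4)$ on its own, after which $(4)\Leftrightarrow(3)$ can be inserted separately.
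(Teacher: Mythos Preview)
Your proof is correct, and your implications $(1)\Rightarrow(4)$, $(4)\Rightarrow(2)$ (the shortcut you record at the end), and $(2)\Rightarrow(1)$ match the paper's verbatim. The organization differs: the paper proves $(1)\Leftrightarrow(2)$, then $(1)\Rightarrow(4)\Rightarrow(2)$, and obtains $(3)\Leftrightarrow(4)$ by citing \cite[Corollary VII.1.7]{Jo} rather than arguing it out. Your route is a single cycle $(1)\Rightarrow(4)\Rightarrow(3)\Rightarrow(2)\Rightarrow(1)$ with two genuinely new ingredients: a self-contained $(4)\Rightarrow(3)$ via monotone convergence of the finite-join/finite-meet nets in the product topology, and a direct $(3)\Rightarrow(2)$ that exploits the criterion $\bigvee M\in M$ from Proposition~\ref{principals}(4) together with completeness to derive a contradiction from $\bigvee N=1$. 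The paper's approach is shorter because it outsources the order--topology link to Johnstone; yours is more elementary and makes that link explicit, and your $(3)\Rightarrow(2)$ is a pleasant use of Proposition~\ref{principals}(4) that the paper does not exploit. Your caveat about reading $q_y\circ\varphi=p_x$ as an equality of $[0,1]$-valued homomorphisms is exactly how the paper treats it as well.
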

\begin{proof}
We prove that $(1)\Leftrightarrow (2)$. Suppose that (1) holds, and let $M$ be a principal maximal ideal of $B$. Then by Lemma \ref{principal}, there exists $y\in Y$ such that $M=ker q_y$. It follows from (1) that there exists a unique $x\in X$ such that $q_y\circ \varphi=p_x$. Hence, $ker p_x\subseteq \varphi^{-1}(M)$ and by maximality, we obtain $\varphi^{-1}(M)=ker p_x$. Therefore, by Lemma \ref{principal}  $\varphi^{-1}(M)$ is a principal maximal ideal. Conversely, suppose that  $\varphi$ reflects principal maximals ideals, and let $y\in Y$. Then, since $ker q_y$ is a principal maximal ideal of $B$, it follows that $\varphi^{-1}(ker q_y)=ker (q_y\circ \varphi)$ is a principal maximal ideal of $A$ and by Lemma \ref{principal}, $ker (q_y\circ \varphi)=ker p_x$ for a unique $x\in X$. Therefore, $q_y\circ \varphi=p_x$. \\
Next, we prove that $(1)\Rightarrow (4)$. Suppose that for every $y\in Y$, there exists a unique $x\in X$ such that $q_y\circ \varphi=p_x$. Then, since $q_y\circ \varphi$ is continuous for every $y\in Y$ (as it is equal to the projection $p_x$, for some $x$), then $\varphi$ is continuous.\\
As for $(4)\Rightarrow (2)$, suppose that $\varphi$ is continuous and let $M$ be a principal maximal ideal of $B$. Then, $M$ is closed and so $\varphi^{-1}(M)$ is closed. Since $A$ is compact, then $\varphi^{-1}(M)$ is compact and by Proposition \ref{principal}, $\varphi^{-1}(M)$ is  a principal maximal ideal of $A$.\\
The remaining equivalence $(3)\Leftrightarrow (4)$ follows from \cite[Corollary VII.1.7]{Jo}.
\end{proof}
\begin{rem}
Note that it follows from the conditions (2) or (3) that every algebraic isomorphism between two CHMV-algebras is automatically a topological isomorphism, i.e., a homeomorphism.
\end{rem}
Recall that an MV-algebra is called locally finite if all its finitely generated sub-MV-algebras are finite\cite{CDM} and called locally weakly finite if its finitely generated sub-MV-algebras are finite direct products of simple MV-algebras\cite{CM}. Also an MV-algebra $A$ is called hyperarchimedean if for every $a\in A$, there exists $n\geq 1$ such that $na=(n+1)a$. It is well known that the class of locally finite MV-algebras is contained in that of locally weakly finite, which in turns is contained in the class of hyperachimedean MV-algebras.\\
The next result offers a simple description of all CHMV-algebras that are locally weakly finite.
\begin{prop}\label{weak}
Given a CHMV-algebra $A:=\prod_{x\in X}\L_{n_x}$ and $\eta(A):=\{n_x:x\in X_{\text{fin}}\}$, the following assertions are equivalent. 
\begin{enumerate}
\item $A$ is locally weakly finite;
\item $A$ is hyperarchimedean;
\item $X_{\infty}$ and $\eta(A)$ are finite.
\end{enumerate}
\end{prop}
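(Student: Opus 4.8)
The plan is to establish the cycle $(1)\Rightarrow(2)\Rightarrow(3)\Rightarrow(1)$. The implication $(1)\Rightarrow(2)$ is immediate, being the containment of the locally weakly finite MV-algebras in the hyperarchimedean ones recalled just before the statement.

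For $(2)\Rightarrow(3)$ I would argue by contraposition: assuming that $X_{\infty}$ is infinite or $\eta(A)$ is infinite, I produce an element $a\in A$ with $ma\neq(m+1)a$ for every $m\geq 1$. In either case one can extract an infinite sequence of pairwise distinct indices $z_1,z_2,\dots\in X$ together with elements $\epsilon_k\in\L_{n_{z_k}}$ satisfying $0<\epsilon_k\leq 1/k$: if $X_{\infty}$ is infinite, take $z_k\in X_{\infty}$ and $\epsilon_k=1/(k+1)$; if $\eta(A)$ is infinite, then $\{n_x:x\in X_{\text{fin}}\}$ is unbounded, so one may choose $z_k\in X_{\text{fin}}$ with $n_{z_k}\geq k+1$ and let $\epsilon_k=1/(n_{z_k}-1)$, the least positive element of $\L_{n_{z_k}}$. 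Let $a(z_k)=\epsilon_k$ and $a(x)=0$ for $x\notin\{z_k:k\geq 1\}$. Then for each $m$, evaluating at $z_{m+2}$ gives $(ma)(z_{m+2})=m\epsilon_{m+2}$ and $((m+1)a)(z_{m+2})=(m+1)\epsilon_{m+2}$, which are both strictly below $1$ and distinct because $\epsilon_{m+2}>0$; hence $ma\neq(m+1)a$ for every $m$, so $A$ is not hyperarchimedean.

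For $(3)\Rightarrow(1)$, write $\eta(A)=\{k_1,\dots,k_r\}$ and $Y_i=\{x\in X_{\text{fin}}:n_x=k_i\}$; grouping the finite factors by value and using $A\cong A_{\text{fin}}\times A_{\infty}$ gives a decomposition $A\cong\L_{k_1}^{Y_1}\times\cdots\times\L_{k_r}^{Y_r}\times[0,1]^{|X_{\infty}|}$ as a \emph{finite} direct product, whose factors are the powers $\L_{k_i}^{Y_i}$ of finite chains together with finitely many copies of $[0,1]$. I would then verify three facts: (i) every power $\L_k^{Y}$ of a finite chain is locally finite --- a subalgebra of it generated by $a_1,\dots,a_m$ is constant on each of the at most $k^m$ fibres of the map $y\mapsto(a_1(y),\dots,a_m(y))$, hence embeds into a finite power of $\L_k$ and is finite; (ii) $[0,1]$ is locally weakly finite, since every subalgebra of $[0,1]$ is simple; and (iii) the class of locally weakly finite MV-algebras is closed under finite products. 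For (iii): a finitely generated subalgebra $S$ of $B_1\times\cdots\times B_s$ embeds into the product of its projections $\pi_j(S)$, each of which is finitely generated, hence a finite product of simple MV-algebras; thus $S$ is a subalgebra of a finite product $C_1\times\cdots\times C_p$ of simple MV-algebras, so $S$ is semisimple, and restricting the coordinate projections to $S$ shows $\mathrm{Max}(S)$ has at most $p$ elements (a prime ideal containing the intersection of these $p$ maximal kernels contains one of them). Since $S$ is semisimple, the intersection of its finitely many pairwise comaximal maximal ideals is $\{0\}$, and the Chinese Remainder Theorem gives $S\cong\prod_{\mathfrak{m}\in\mathrm{Max}(S)}S/\mathfrak{m}$, a finite product of simple MV-algebras. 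Combining (i)--(iii) with ``locally finite $\subseteq$ locally weakly finite'' and the displayed decomposition yields $(1)$.

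The product bookkeeping and the estimates in $[0,1]$ are routine; the one step requiring genuine care is the end of $(3)\Rightarrow(1)$, namely that a finitely generated subalgebra of a \emph{finite} product of simple MV-algebras is itself a finite product of simple MV-algebras --- this is where semisimplicity, the finiteness of $\mathrm{Max}(S)$, and the Chinese Remainder Theorem come in.
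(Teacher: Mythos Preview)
Your proof is correct. The implications $(1)\Rightarrow(2)$ and $(2)\Rightarrow(3)$ match the paper's approach closely: the paper also produces witnesses to the failure of hyperarchimedeanness, using $f(x)=1/(n_x-1)$ on $X_{\text{fin}}$ to bound $\eta(A)$ and $g(x)=1/x$ on a copy of $\mathbb{N}\subseteq X_\infty$; your unified construction with the $\epsilon_k$'s is a minor repackaging of the same idea.

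The real divergence is in $(3)\Rightarrow(1)$. The paper observes that under $(3)$ every $f\in A$ has finite range, since $f(X)\subseteq\bigl(\bigcup_{n\in\eta(A)}\L_n\bigr)\cup f(X_\infty)$, and then invokes \cite[Theorem~3.1(iv)]{CM}, which characterises locally weakly finite MV-algebras as exactly those in which every element has finite range. Your argument instead decomposes $A$ as a \emph{finite} product of powers $\L_{k_i}^{Y_i}$ and copies of $[0,1]$, and then proves directly that each factor is locally weakly finite and that the class is closed under finite products via semisimplicity, finiteness of $\mathrm{Max}(S)$, and the Chinese Remainder Theorem. The paper's route is shorter but imports a nontrivial external characterisation; yours is longer but self-contained, and the closure lemma you prove (a finitely generated subalgebra of a finite product of simple MV-algebras is itself such a product) is of independent interest. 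Both are valid; the paper's is the more economical write-up given that \cite{CM} is already being cited.
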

\begin{proof}
$(1) \Rightarrow (2)$: Holds for every MV-algebra as observed before the proposition.\\
$(2) \Rightarrow (3)$: Assume that $A$ is hyperarchimedean. Consider $f\in A$ defined by $f(x)=\frac{1}{n_x-1}$ for $x\in X_{\text{fin}}$ and $f(x)=0$ otherwise. Since $A$ is hyperarchimedean, there exists $n\geq 1$ such that $nf=(n+1)f$. It follows that $n\geq n_x-1$ for all $x\in X_{\text{fin}}$, that is $n_x\leq n+1$ for all $x\in X_{\text{fin}}$. Thus $\eta(A)$ is finite. In addition, if $X_{\infty}$ is infinite, then it contains a copy of $\mathbb{N}$ which we identify with $\mathbb{N}$ and write $\mathbb{N}\subseteq X_{\infty}$. Define $g\in A$ by $g(x)=\frac{1}{x}$ if $x\in \mathbb{N}$ and $g(x)=0$ otherwise. Then for every $n\geq 1$, $ng(n+1)=\frac{n}{n+1}$ and $(n+1)g(n+1)=1$. So, $ng\neq (n+1)g$ for all $n\geq 1$, which violates the fact that $A$ is hyperarchimedean. Thus, $X_{\infty}$ is finite.\\
$(3) \Rightarrow (1)$: Suppose that $X_{\infty}$ and $\eta(A)$ are finite. Then for every $f\in A$, $f(X)\subseteq (\cup_{n\in \eta{A}}\L_n)\cup f(X_{\infty})$, which is finite. Therefore, every $f\in A$ has finite range and $A$ is locally weakly finite by \cite[Theorem 3.1(iv)]{CM}.
\end{proof}
\section{CHMV-algebras versus E-multisets}
$\mathbb{CHMV}$ will denote the category of CHMV-algebras and continuous homomorphisms. We recall that a multiset is a pair $\langle X, \sigma :X\to \mathbb{N}\rangle$, where $X$ is a set and $\sigma$ is a map, and for each $x\in X$, $\sigma(x)$ is called the multiplicity of $x$. We extend this definition to extended multiset, where infinite multiplicities are allowed. More precisely, an extended multiset (e-multiset) is  a pair $\langle X, \sigma :X\to \overline{\mathbb{N}}\rangle$, where $X$ is a set and $\sigma$ is a map, and for each $x\in X$, $\sigma(x)$ is called the multiplicity of $x$. Given two G-multisets $\langle X, \sigma \rangle$ and $\langle Y, \mu \rangle$, a morphism from $\langle X, \sigma \rangle$ to $\langle Y, \mu \rangle$ is a map $\varphi: X\to Y$ such that for every $x\in X$, if $\sigma(x)$ is finite, then $\mu(\varphi(x))$ is finite and $\mu(\varphi(x))$ divides $\sigma(x)$.

The main objective of this section is to establish a categorical equivalence between the category $\mathbb{EM}$ of G-multisets and their morphisms and the category $\mathbb{CHMV}^{op}$ of CHMV-algebras and continuous homomorphisms. This equivalence will extend the equivalence between the categories of multisets and that of Stone MV-algebras obtained in \cite[Theorem 3.6]{jbn}. It should also be observed that the short proof of the equivalence in \cite[Theorem 3.6]{jbn} uses heavily the theory of Pro/Ind-completions (see \cite[Sec. VI]{Jo}) and of the connection between profinite MV-algebras and Stone MV-algebras. However, it is not clear in the present context how to deduce the equivalence using a similar technique. For this reason, we shall offer a direct and self-contained proof of the equivalence.

We start by defining two functors $\mathcal{H}: \mathbb{CHMV}^{op}\to \mathbb{EM}$ and $\mathcal{F}: \mathbb{EM}\to \mathbb{CHMV}^{op}$.
\begin{itemize}
\item[(1)] $\mathcal{H}: \mathbb{CHMV}^{op}\to \mathbb{EM}$. For any CHMV-algebra $A$, set $$\mathcal{H}_c(A):=\left\{\chi:A\to [0,1]:\chi \; \text{is a homomorphism and}\; ker\chi \; \text{is compact (maximal) ideal}\right\}$$ and $\sigma_A:\mathcal{H}_c(A)\to\overline{\mathbb{N}}$ defined by $\sigma_A(\chi)=\#\chi(A)-1$. \\ Note that $\sigma_A(\chi)=\infty$ when $\chi(A)=[0,1]$.
\begin{itemize}
\item On objects: Given a CHMV-algebra $A$, define $\mathcal{H}_c(A)=\langle \mathcal{H}_c(A), \sigma_A\rangle$.
\item On morphisms: let $\varphi$ be a homomorphism in $\mathbb{CHMV}^{op}$ from $A\to B$, that is $\varphi:B\to A$ is a continuous MV-algebras homomorphism. Define $\mathcal{H}(\varphi): \mathcal{H}_c(A) \to \mathcal{H}_c(B)$ by $\mathcal{H}(\varphi)(\chi)=\chi \circ \varphi$. Note that since $ker\chi$ is a compact maximal ideal of $A$, $ker(\chi \circ \varphi)=\varphi^{-1}(ker\chi)$, and $\varphi$ is continuous, then $ker(\chi \circ \varphi)$ is a compact maximal ideal of $B$. So, $\chi \circ \varphi\in \mathcal{H}_c(B)$ and $\mathcal{H}(\varphi)$ is well-defined. On the other hand, since isomorphic sub-MV-algebras of $[0,1]$ are equal (see for e.g., \cite[Cor. 3.5.4, Cor. 7.2.6]{C2}), it follows that for each $\chi\in \mathcal{H}_c(A)$, $\chi(A)=\L_{\#\chi(A)}$. Consequently, $\L_{\#(\chi \circ \varphi)(B)}\subseteq \L_{\#\chi(A)}$. In particular, if $\#\chi(A)-1$ is finite, so is $\#(\chi \circ \varphi)(B)-1$ and $\#(\chi \circ \varphi)(B)-1$ divides $\#\chi(A)-1$. That is, for all $\chi\in \mathcal{H}_c(A)$, $\sigma_B(\mathcal{H}(\varphi)(\chi))$ is finite whenever $\sigma_A(\chi)$ is finite and $\sigma_B(\mathcal{H}(\varphi)(\chi))$ divides $\sigma_A(\chi)$. Therefore, $\mathcal{H}(\varphi)$ is a morphism in $\mathbb{EM}$ from $\mathcal{H}_c(A) \to \mathcal{H}_c(B)$.\\
\end{itemize}
\item[(2)] $\mathcal{F}: \mathbb{EM}\to \mathbb{CHMV}^{op}$. For any G-multiset $\langle X, \sigma\rangle$, $\displaystyle\prod_{x\in X}\L_{\sigma(x)+1}$ is clearly a CHMV-algebra, that shall be denoted by $A_{X,\sigma}$.
\begin{itemize}
\item On objects: Given a multiset $\langle X, \sigma\rangle$, define $\mathcal{F}(\langle X, \sigma\rangle):=A_{X,\sigma}$.
\item On morphisms: Let $\varphi: \langle X, \sigma\rangle \to \langle Y, \mu\rangle$ be a morphism in $\mathbb{EM}$. Define $\mathcal{F}(\varphi):A_{Y,\mu}\to A_{X,\sigma}$ by $\mathcal{F}(\varphi)(f)(x)=f(\varphi(x))$ for all $f\in A_{Y,\mu}$ and all $x\in X$. To see that $\mathcal{F}(\varphi)$ is well-defined, first note that for all $f\in A_{Y,\mu}$ and all $x\in X$, $f(\varphi(x))\in \L_{\mu(\varphi(x))+1}$. On the other hand, for every $x\in X$, if $\sigma(x)$ is finite, so is $\mu(\varphi(x))$ and $\mu(\varphi(x))$ divides $\sigma(x)$. So for all $x\in X$ such that $\sigma(x)$ is finite, $ \L_{\mu(\varphi(x))+1}\subseteq \L_{\sigma(x)+1}$. The latter inclusion is obviously true when $\sigma(x)=\infty$, so $ \L_{\mu(\varphi(x))+1}\subseteq \L_{\sigma(x)+1}$ for all $x\in X$. Thus, $f(\varphi(x))\in \L_{\sigma(x)+1}$. First, it is clear that $\mathcal{H}(\varphi)$ is an MV-algebra homomorphism. In addition, let $M$ be a principal maximal ideal of $A_{X,\sigma}$, then by Lemma \ref{principal}, there exists $x_0\in X$ such that $M=M_{x_0}$. It is clear that $\mathcal{H}(\varphi)^{-1}(M_{x_0})=M_{\varphi(x_0)}$, which is a principal maximal ideal of $A_{Y,\mu}$. Therefore, $\mathcal{H}(\varphi)$ is a continuous MV-algebra homomorphism from $A_{Y,\mu} \to A_{X,\sigma}$.\\
\end{itemize}
\end{itemize}
The preceding ingredients provide the actions on objects and morphisms of two functors as formulated in the next result. 
\begin{prop}
$\mathcal{H}: \mathbb{CHMV}^{op}\to \mathbb{EM}$ and $\mathcal{F}: \mathbb{EM}\to \mathbb{CHMV}^{op}$ are functors.
\end{prop}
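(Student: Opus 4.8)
The object and morphism assignments of $\mathcal{H}$ and $\mathcal{F}$ have already been written down, and it has already been checked that each of them sends a morphism of its source category to a morphism of its target category (in particular, that $\mathcal{H}(\varphi)$ and $\mathcal{F}(\varphi)$ land in the appropriate hom-sets, with the continuity of $\mathcal{F}(\varphi)$ verified above). So the plan is merely to verify the two functoriality axioms: identities are sent to identities, and composition is preserved. The only point that needs care throughout is the bookkeeping forced by the opposite category $\mathbb{CHMV}^{op}$, in which a morphism $A\to B$ is a continuous homomorphism $\varphi:B\to A$ and composition is the reverse of composition in $\mathbb{CHMV}$.

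For $\mathcal{H}$ I would first observe that $\mathcal{H}(\mathrm{id}_A)(\chi)=\chi\circ\mathrm{id}_A=\chi$ for every $\chi\in\mathcal{H}_c(A)$, so $\mathcal{H}(\mathrm{id}_A)=\mathrm{id}_{\mathcal{H}_c(A)}$. For composition, I would unwind a composable pair of $\mathbb{CHMV}^{op}$-morphisms $A\to B\to C$ as a pair of continuous homomorphisms $\varphi:B\to A$ and $\psi:C\to B$, whose $\mathbb{CHMV}^{op}$-composite $A\to C$ is represented by the continuous homomorphism $\varphi\circ\psi:C\to A$. Then, for $\chi\in\mathcal{H}_c(A)$, associativity of function composition gives
\[
\chi\circ(\varphi\circ\psi)=(\chi\circ\varphi)\circ\psi=\mathcal{H}(\psi)\bigl(\mathcal{H}(\varphi)(\chi)\bigr),
\]
so $\mathcal{H}$ of the composite morphism equals $\mathcal{H}(\psi)\circ\mathcal{H}(\varphi)$, which is exactly the $\mathbb{EM}$-composite of $\mathcal{H}$ of the two morphisms. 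Hence $\mathcal{H}$ is a functor.

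For $\mathcal{F}$ I would first record that $\mathcal{F}(\varphi)$ is nothing but the reindexing homomorphism $f\mapsto f\circ\varphi$; the identity axiom is then immediate from $f\circ\mathrm{id}_X=f$. For composition, given morphisms $\varphi:\langle X,\sigma\rangle\to\langle Y,\mu\rangle$ and $\psi:\langle Y,\mu\rangle\to\langle Z,\nu\rangle$ in $\mathbb{EM}$, contravariance means I must check $\mathcal{F}(\psi\circ\varphi)=\mathcal{F}(\varphi)\circ\mathcal{F}(\psi)$ as continuous homomorphisms $A_{Z,\nu}\to A_{X,\sigma}$; and indeed, for $f\in A_{Z,\nu}$,
\[
\bigl(\mathcal{F}(\varphi)\circ\mathcal{F}(\psi)\bigr)(f)=\mathcal{F}(\psi)(f)\circ\varphi=(f\circ\psi)\circ\varphi=f\circ(\psi\circ\varphi)=\mathcal{F}(\psi\circ\varphi)(f),
\]
again by associativity. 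Hence $\mathcal{F}$ is a functor.

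I do not anticipate any genuine obstacle here: once the well-definedness already established in the text is granted, the remaining content is purely formal and reduces to the associativity of composition together with the behaviour of identities. The only thing to watch is the reversal of arrows in $\mathbb{CHMV}^{op}$, which is precisely what makes both $\mathcal{H}$ and $\mathcal{F}$ come out as the (covariant) functors to and from $\mathbb{CHMV}^{op}$ that were claimed.
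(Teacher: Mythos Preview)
Your proposal is correct and follows exactly the approach the paper indicates: the paper's proof simply says that functoriality follows from the definitions and leaves the verification to the reader, and you have carried out precisely that routine verification of identities and composition, with appropriate care for the arrow reversal in $\mathbb{CHMV}^{op}$. There is nothing more to add.
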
 
\begin{proof}
This follows from the various definitions formulated above and the actual verification of the details is left to the reader.
\end{proof}
\begin{prop}\label{nattrans1}
Let $\langle X, \sigma \rangle$ be a multiset, define $\eta_X:\langle X, \sigma \rangle \to \langle \mathcal{H}_c(A_{X,\sigma}),\sigma_{A_{X,\sigma}}\rangle$ by $\eta_X(x)(f)=f(x)$, for all $x\in X$ and all $f\in A_{X,\sigma}$.\\
Then $\eta_X$ is an isomorphism in $\mathbb{EM}$.
\end{prop}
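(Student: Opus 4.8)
The plan is to exhibit $\eta_X$ as a well-defined bijection that carries multiplicities \emph{exactly}, and then note that any such map is automatically an isomorphism in $\mathbb{EM}$. First, I would observe that $\eta_X(x)$ is precisely the natural projection $p_x\colon A_{X,\sigma}\to \L_{\sigma(x)+1}$, so $\eta_X(x)\in\mathcal{H}_c(A_{X,\sigma})$: its kernel $M_x=\ker p_x$ is a principal maximal ideal by Proposition~\ref{principals}, hence compact by Proposition~\ref{principal}. Moreover $\eta_X(x)(A_{X,\sigma})=\L_{\sigma(x)+1}$, so $\sigma_{A_{X,\sigma}}(\eta_X(x))=\#\L_{\sigma(x)+1}-1=\sigma(x)$ for every $x\in X$ (with the value $\infty$ when $\sigma(x)=\infty$). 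Thus $\eta_X$ sends the multiplicity of $x$ precisely to that of $\eta_X(x)$; consequently, once $\eta_X$ is known to be a bijection, both $\eta_X$ and $\eta_X^{-1}$ satisfy the divisibility clause in the definition of a morphism of $\mathbb{EM}$ (with equality), so $\eta_X$ is an $\mathbb{EM}$-isomorphism.

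Injectivity is easy: if $x\ne y$, the function $f\in A_{X,\sigma}$ with $f(x)=1$ and $f(t)=0$ for $t\ne x$ is well defined (every $\L_n$ contains $0$ and $1$) and satisfies $\eta_X(x)(f)=1\ne 0=\eta_X(y)(f)$; equivalently, $M_x\ne M_y$ by the uniqueness clause of Proposition~\ref{principals}(2).

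The substantive step is surjectivity. Given $\chi\in\mathcal{H}_c(A_{X,\sigma})$, its kernel is a compact maximal ideal, hence principal by Proposition~\ref{principal}, hence $\ker\chi=M_{x_0}=\ker p_{x_0}$ for a unique $x_0\in X$ by Proposition~\ref{principals}(2). It remains to identify $\chi$ with $p_{x_0}=\eta_X(x_0)$. Since $\ker\chi=\ker p_{x_0}$ and both maps are surjective onto their images, the homomorphism theorem yields an isomorphism $\theta$ from $p_{x_0}(A_{X,\sigma})$ onto $\chi(A_{X,\sigma})$ with $\theta\circ p_{x_0}=\chi$; because isomorphic sub-MV-algebras of $[0,1]$ coincide \cite[Cor.~3.5.4, Cor.~7.2.6]{C2}, both images equal $\L_{\sigma(x_0)+1}$ and $\theta$ is an automorphism of $\L_{\sigma(x_0)+1}$. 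But no sub-MV-algebra of $[0,1]$ admits a non-identity automorphism --- for a finite chain this is immediate from the monotonicity of MV-homomorphisms, and for $[0,1]$ itself it is classical --- so $\theta=\mathrm{id}$ and $\chi=p_{x_0}=\eta_X(x_0)$. Hence $\eta_X$ is onto $\mathcal{H}_c(A_{X,\sigma})$, and the proof is complete.

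I expect the only delicate ingredient to be the last one: that an MV-homomorphism $A_{X,\sigma}\to[0,1]$ is determined by its kernel, which I reduce to the rigidity (absence of nontrivial automorphisms) of the MV-chains $\L_n$, $2\le n\le\infty$. Everything else is a routine combination of the projection description of $\eta_X$, Propositions~\ref{principals} and~\ref{principal}, and the observation that an exactly-multiplicity-preserving bijection is an isomorphism of $\mathbb{EM}$.
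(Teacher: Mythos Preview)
Your proof is correct and follows essentially the same route as the paper: identify $\eta_X(x)$ with the projection $p_x$, check well-definedness via Propositions~\ref{principals} and~\ref{principal}, prove injectivity with an obvious separating element, and for surjectivity reduce $\chi$ to some $p_{x_0}$ via compactness of $\ker\chi$ together with the fact that $[0,1]$-valued MV-homomorphisms are determined by their kernels. Two minor differences worth noting: the paper simply cites \cite[Sec.~4]{CDM} for the kernel-determines-homomorphism step, whereas you unpack it via the rigidity of sub-MV-chains of $[0,1]$; and you explicitly observe that $\sigma_{A_{X,\sigma}}(\eta_X(x))=\sigma(x)$ exactly, which cleanly guarantees that $\eta_X^{-1}$ is also an $\mathbb{EM}$-morphism --- a point the paper leaves implicit when it concludes ``$\eta_X$ is an isomorphism'' from bijectivity plus the one-sided divisibility check.
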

\begin{proof}
Note that for each $x\in X$, $\eta_X(x)$ is a homomorphism from $A_{X,\sigma}\to \L_{\sigma(x)+1}$, in particular $\eta_X(x)\in \mathcal{H}_c(A_{X,\sigma})$ and $\eta_X$ is well-defined. To see that $\eta_X$ is a morphism, let $x\in X$, then $\eta_X(x)(A_{X,\sigma})\subseteq \L_{\sigma(x)+1}$. Thus, $\L_{\#\eta_X(x)(A_{X,\sigma})}\subseteq \L_{\sigma(x)+1}$. Hence, for every $x\in X$, if $\sigma(x)<\infty$, then $\#\eta_X(x)(A_{X,\sigma})-1< \infty$ and $\#\eta_X(x)(A_{X,\sigma})-1$ divides $\sigma(x)$. Whence, for every $x\in X$, if $\sigma(x)<\infty$, then $\sigma_{A_{X,\sigma}}(\eta_X(x))<\infty$ and $\sigma_{A_{X,\sigma}}(\eta_X(x))$ divides $\sigma(x)$.\\
 It remains to prove that $\eta_X$ is bijective.\\
Injectivity: Let $x_1,x_2\in X$ such that $x_1\ne x_2$. Define $f\in A_{X,\sigma}$ by $f(x_1)=0$ and $f(x)=1$ for $x\ne x_1$. Then $\eta_X(x_1)(f)=0$, while $\eta_X(x_2)(f)=1$. Therefore $\eta_X(x_1)\ne \eta_X(x_2)$ and $\eta_X$ is injective.\\
Surjectivity: Let $\chi \in \mathcal{H}_c(A_{X,\sigma})$, then $ker\chi$ is a compact maximal ideal of $A_{X,\sigma}$. By Lemma \ref{principal}, there exists $x\in X$ such that $ker \chi=M_x=kerp_x$. Hence, since homomorphisms from any MV-algebra into $[0,1]$ are completely determined by their kernels (see for e.g., \cite[Sec. 4]{CDM}, the introductory paragraph), we deduce that $\chi=p_x$, and it follows that $\eta_X(x)=\chi$.\\
Thus, $\eta_X$ is an isomorphism in $\mathbb{EM}$.
\end{proof}
\begin{prop}
Let $A$ be a CHMV-algebra. Define $\varepsilon_A: A\to \displaystyle\prod_{\chi\in \mathcal{H}_c(A)}\L_{\#\chi(A)}$ by $\varepsilon_A(f)(\chi)=\chi(f)$
for all $f\in A$ and all $\chi\in  \mathcal{H}_c(A)$.\\
Then $\varepsilon_A$ is an isomorphism in $\mathbb{CHMV}^{op}$.
\end{prop}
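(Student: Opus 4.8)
The plan is to identify the index set $\mathcal{H}_c(A)$ of the product on the right with the natural index set $X$ of $A$, after which $\varepsilon_A$ becomes, up to this relabelling, the identity map.

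Write $A=\displaystyle\prod_{x\in X}\L_{n_x}$ as in the standing conventions. First I would pin down $\mathcal{H}_c(A)$. By definition $\chi\in\mathcal{H}_c(A)$ iff $\ker\chi$ is a compact maximal ideal; by Proposition \ref{principal} these are exactly the principal maximal ideals, and by Proposition \ref{principals} (the equivalence $(1)\Leftrightarrow(2)$) these are precisely the ideals $M_x=\ker p_x$, $x\in X$, with $x\mapsto M_x$ a bijection onto the set of principal maximal ideals (distinct $x$ give distinct kernels). Since a homomorphism from an MV-algebra into $[0,1]$ is determined by its kernel (the fact recalled from \cite{CDM}), the assignment $x\mapsto p_x$, where $p_x\colon A\to\L_{n_x}\subseteq[0,1]$, is a bijection from $X$ onto $\mathcal{H}_c(A)$; moreover $p_x(A)=\L_{n_x}$, so $\#p_x(A)=n_x$ for every $x$. (This also re-confirms that $\varepsilon_A$ is well defined: each $\chi(A)$ is a sub-MV-algebra of $[0,1]$, hence equals $\L_{\#\chi(A)}$, so $\chi(f)\in\L_{\#\chi(A)}$.)

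Next I would transport the product along this bijection: the map
\[
\Theta\colon\ \prod_{\chi\in\mathcal{H}_c(A)}\L_{\#\chi(A)}\ \longrightarrow\ \prod_{x\in X}\L_{n_x}=A,\qquad \Theta(g)(x)=g(p_x),
\]
is an MV-algebra isomorphism (a mere coordinate relabelling), and for each $x\in X$ one has $p_x\circ\Theta=\pi_{p_x}$, the projection of the source onto the $p_x$-coordinate; thus $\Theta$ satisfies condition (1) of Proposition \ref{comp-h} (and so does $\Theta^{-1}$), whence $\Theta$ is continuous with continuous inverse — alternatively this is immediate from the Remark following Proposition \ref{comp-h}. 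Finally, for $f\in A$ and $x\in X$,
\[
\Theta\bigl(\varepsilon_A(f)\bigr)(x)=\varepsilon_A(f)(p_x)=p_x(f)=f(x),
\]
so $\Theta\circ\varepsilon_A=\mathrm{id}_A$, i.e. $\varepsilon_A=\Theta^{-1}$. Hence $\varepsilon_A$ is an MV-algebra isomorphism that is simultaneously a homeomorphism, therefore an isomorphism in $\mathbb{CHMV}$ and a fortiori in $\mathbb{CHMV}^{op}$.

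I do not expect a genuine obstacle: the argument is essentially bookkeeping already licensed by Propositions \ref{principals}, \ref{principal} and \ref{comp-h}. The only points needing a little care are (i) that $\varepsilon_A$ really lands in $\prod_{\chi}\L_{\#\chi(A)}$ (handled above via $\chi(A)=\L_{\#\chi(A)}$), and (ii) that $x\mapsto p_x$ is injective onto $\mathcal{H}_c(A)$, which rests on the bijectivity in Proposition \ref{principals}$(2)$ together with the fact that homomorphisms into $[0,1]$ are determined by their kernels.
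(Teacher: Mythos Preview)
Your proposal is correct and follows essentially the same approach as the paper. The paper verifies well-definedness via $\chi(A)=\L_{\#\chi(A)}$, checks continuity by showing $\varepsilon_A$ reflects principal maximal ideals, and then proves injectivity and surjectivity separately, the latter by defining $f(x)=g(p_x)$ for a given $g$; this is exactly your map $\Theta$, so your packaging of the argument as ``$\varepsilon_A=\Theta^{-1}$ for the coordinate-relabelling $\Theta$'' is just a slightly more streamlined organization of the same ingredients (Propositions \ref{principals}, \ref{principal}, \ref{comp-h} and the kernel-determines-homomorphism fact).
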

\begin{proof}
Since $\chi(A)=\L_{\#\chi(A)}$ for all $\in \mathcal{H}_c(A)$, it follows that $\varepsilon_A$ is well-defined. In addition, let $M$ be a principal maximal ideal of $\displaystyle\prod_{\chi\in \mathcal{H}_c(A)}\L_{\#\chi(A)}$, then by Lemma \ref{principal}, there exists $\chi_0\in \mathcal{H}_c(A)$ such that $M=M_{\chi_0}$. But, it is clear that $\varepsilon_A^{-1}(M_{\chi_0})=ker \chi_0$, which is principal maximal ideal of $A$. It is straightforward to verify that $\varepsilon_A$ is a homomorphism of MV-algebras. Thus, $\varepsilon_A$ is a continuous MV-algebras homomorphism. It remains to prove that $\varepsilon_A$ is bijective.\\
Injectivity: Let $f, g\in A$ such that $\varepsilon_A(f)=\varepsilon_A(g)$, then for all $\chi\in  \mathcal{H}_c(A)$, $\chi(f)=\chi(g)$. Since $A$ is CHMV-algebra, there exists a set $X$ and $\{n_x\}_{x\in X}\subseteq \overline{\mathbb{N}}$ such that $A=\displaystyle\prod_{x\in X}\L_{n_x}$. We have $p_x(f)=p_x(g)$ for all $x\in X$, hence $f(x)=g(x)$ for all $x\in X$ and $f=g$.\\
Surjectivity: Let $g\in \displaystyle\prod_{\chi\in \mathcal{H}_c(A)}\L_{\#\chi(A)}$. Since $A$ is CHMV-algebra, there exists a set $X$ and $\{n_x\}_{x\in X}\subseteq \overline{\mathbb{N}}$ such that $A=\displaystyle\prod_{x\in X}\L_{n_x}$. Then, by Lemma \ref{principal}, $x \leftrightarrow p_x$ is a one-t-one correspondence between $X$ and $\mathcal{H}_c(A)$. Now define $f\in A$ by $f(x)=g(p_x)$. Then, it follows clearly that $\varepsilon_A(f)=g$.\\
Thus, $\varepsilon_A$ is an isomorphism in $\mathbb{CHMV}^{op}$.
\end{proof}
\begin{thm}\label{eq1}
The composite $\mathcal{H}\circ \mathcal{F}$ is naturally equivalent to the identity functor of $\mathbb{EM}$. In other words, for all G-multisets $\langle X, \sigma\rangle$, $\langle Y, \mu\rangle$ and $\varphi: \langle X, \sigma\rangle \to \langle Y, \mu\rangle$ a morphism in $\mathbb{EM}$, we have a commutative diagram
$$
\begin{CD}
\langle X, \sigma\rangle @>\varphi>> \langle Y, \mu\rangle\\
@V\eta_XVV @VV\eta_YV\\
\mathcal{H}(\mathcal{F}(\langle X, \sigma\rangle)) @>\mathcal{H}(\mathcal{F}(\varphi))>> \mathcal{H}(\mathcal{F}( \langle Y, \mu\rangle))
\end{CD}
$$
in the sense that, for each $x\in X$, $\mathcal{H}(\mathcal{F}(\varphi)(\eta_X(x))=\eta_Y(\varphi(x))$
\end{thm}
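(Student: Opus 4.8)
The plan is to leverage Proposition \ref{nattrans1}, which already provides, for every G-multiset $\langle X,\sigma\rangle$, an isomorphism $\eta_X$ in $\mathbb{EM}$ from $\langle X,\sigma\rangle$ onto $\mathcal{H}(\mathcal{F}(\langle X,\sigma\rangle))$. Hence the only thing that remains is \emph{naturality}: one must check that the displayed square commutes, i.e.\ that $\mathcal{H}(\mathcal{F}(\varphi))(\eta_X(x))=\eta_Y(\varphi(x))$ for every $x\in X$. I would obtain this by unwinding the definitions along the two legs of the square and comparing the resulting homomorphisms $A_{Y,\mu}\to[0,1]$.

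First I would spell out the composite $\mathcal{H}\circ\mathcal{F}$ on the morphism $\varphi$. The arrow $\mathcal{F}(\varphi)$ is, as a morphism of $\mathbb{CHMV}^{op}$, realized by the continuous MV-homomorphism $\psi:=\mathcal{F}(\varphi):A_{Y,\mu}\to A_{X,\sigma}$ given by $\psi(g)(x)=g(\varphi(x))$. Applying the functor $\mathcal{H}$ to this morphism of $\mathbb{CHMV}^{op}$ then yields, by the definition of $\mathcal{H}$ on morphisms (and keeping in mind that the underlying continuous homomorphism runs from $A_{Y,\mu}$ to $A_{X,\sigma}$, so precomposition with $\psi$ lands in $\mathcal{H}_c(A_{Y,\mu})$), the $\mathbb{EM}$-morphism $\mathcal{H}(\mathcal{F}(\varphi)):\mathcal{H}_c(A_{X,\sigma})\to\mathcal{H}_c(A_{Y,\mu})$, $\chi\mapsto\chi\circ\psi$. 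Finally, recall from Proposition \ref{nattrans1} that $\eta_X(x)\in\mathcal{H}_c(A_{X,\sigma})$ is the evaluation homomorphism $f\mapsto f(x)$ (i.e.\ $\eta_X(x)=p_x$), and likewise for $\eta_Y$.

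The verification is then a one-line computation: for $x\in X$ and any $g\in A_{Y,\mu}$,
$$\mathcal{H}(\mathcal{F}(\varphi))(\eta_X(x))(g)=(\eta_X(x)\circ\psi)(g)=\psi(g)(x)=g(\varphi(x))=\eta_Y(\varphi(x))(g),$$
so $\mathcal{H}(\mathcal{F}(\varphi))(\eta_X(x))$ and $\eta_Y(\varphi(x))$ coincide as maps $A_{Y,\mu}\to[0,1]$, hence are equal. This shows the square commutes; combined with Proposition \ref{nattrans1} it yields that $\eta=\{\eta_X\}$ is a natural isomorphism from the identity functor of $\mathbb{EM}$ to $\mathcal{H}\circ\mathcal{F}$.

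I do not expect a real obstacle here, since the substance was already isolated in Proposition \ref{nattrans1}; the only delicate point is the bookkeeping of arrow directions through $\mathbb{CHMV}^{op}$ — making sure that $\mathcal{F}(\varphi)$ is read as the $\mathbb{CHMV}^{op}$-morphism whose underlying continuous homomorphism is $A_{Y,\mu}\to A_{X,\sigma}$, and that $\mathcal{H}$ reverses it again via precomposition, so that the chased square is genuinely a square in $\mathbb{EM}$ with both legs honest $\mathbb{EM}$-morphisms. Once that is pinned down, the evaluation above finishes the proof.
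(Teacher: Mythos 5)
Your proposal is correct and follows essentially the same route as the paper: the commutativity of the square is verified by exactly the same one-line evaluation computation $\mathcal{H}(\mathcal{F}(\varphi))(\eta_X(x))(g)=(\eta_X(x)\circ\mathcal{F}(\varphi))(g)=\mathcal{F}(\varphi)(g)(x)=g(\varphi(x))=\eta_Y(\varphi(x))(g)$, with the isomorphism part delegated to Proposition \ref{nattrans1}. Your extra care about the arrow directions through $\mathbb{CHMV}^{op}$ is a sensible clarification but does not change the argument.
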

\begin{proof}
Let $x\in X$, then $\mathcal{H}(\mathcal{F}(\varphi))(\eta_X(x))=\eta_X(x)\circ \mathcal{F}(\varphi)$. For every $g\in A_{Y,\mu}$, 
$$
\begin{aligned}
(\eta_X(x)\circ \mathcal{F}(\varphi))(g)&=\eta_X(x)(\mathcal{F}(\varphi)(g))\\
&=\mathcal{F}(\varphi)(g)(x)\\
&=g(\varphi(x))\\
&=\eta_Y(\varphi(x))(g)
\end{aligned}
$$
Hence $\mathcal{H}(\mathcal{F}(\varphi)(\eta_X(x))=\eta_Y(\varphi(x))$ for all $x\in X$ as claimed.
\end{proof}
\begin{thm}\label{eq2}
The composite $\mathcal{F}\circ \mathcal{H}$ is naturally equivalent to the identity functor of $\mathbb{CHMV}^{op}$. In other words, for all all CHMV-algebras $A, B$ and $\varphi: A \to B$ a homomorphism in $\mathbb{CHMV}^{op}$, we have a commutative diagram
$$
\begin{CD}
B @>\varphi>> A\\
@V\varepsilon_BVV @VV\varepsilon_AV\\
\mathcal{F}(\mathcal{H}_c(B)) @>\mathcal{F}(\mathcal{H}(\varphi))>> \mathcal{F}(\mathcal{H}_c(A))
\end{CD}
$$
in the sense that, for each $f\in B$, $\mathcal{F}(\mathcal{H}(\varphi))(\varepsilon_B(f))=\varepsilon_A(\varphi(f))$
\end{thm}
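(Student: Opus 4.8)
The plan is to notice that almost all of the work has already been carried out: the preceding proposition establishes that each $\varepsilon_A$ is an isomorphism in $\mathbb{CHMV}^{op}$, and by the definition of $\sigma_A$ the codomain $\prod_{\chi\in\mathcal{H}_c(A)}\L_{\#\chi(A)}$ of $\varepsilon_A$ coincides with $\prod_{\chi\in\mathcal{H}_c(A)}\L_{\sigma_A(\chi)+1}=\mathcal{F}(\mathcal{H}_c(A))$, so $\varepsilon_A\colon A\to\mathcal{F}(\mathcal{H}_c(A))$ really is an arrow between the two objects in the statement. Hence, to obtain the natural equivalence $\mathcal{F}\circ\mathcal{H}\cong \mathrm{id}_{\mathbb{CHMV}^{op}}$, the only thing left is to verify the naturality square, i.e.\ that for a morphism $\varphi\colon A\to B$ of $\mathbb{CHMV}^{op}$ — equivalently, an honest continuous MV-homomorphism $\varphi\colon B\to A$ — one has $\mathcal{F}(\mathcal{H}(\varphi))\circ\varepsilon_B=\varepsilon_A\circ\varphi$ as maps $B\to\mathcal{F}(\mathcal{H}_c(A))$ in $\mathbb{CHMV}$.

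First I would unwind the two composite functors on $\varphi$. By the definition of $\mathcal{H}$ on morphisms, $\mathcal{H}(\varphi)\colon\mathcal{H}_c(A)\to\mathcal{H}_c(B)$ sends $\chi\mapsto\chi\circ\varphi$; this is well defined precisely because $\ker(\chi\circ\varphi)=\varphi^{-1}(\ker\chi)$ is a compact maximal ideal of $B$ whenever $\ker\chi$ is a compact maximal ideal of $A$ and $\varphi$ is continuous, which was noted when the functor $\mathcal{H}$ was introduced. Applying $\mathcal{F}$ to this $\mathbb{EM}$-morphism gives the continuous homomorphism $\mathcal{F}(\mathcal{H}(\varphi))\colon\mathcal{F}(\mathcal{H}_c(B))\to\mathcal{F}(\mathcal{H}_c(A))$ determined by $\mathcal{F}(\mathcal{H}(\varphi))(h)(\chi)=h(\mathcal{H}(\varphi)(\chi))=h(\chi\circ\varphi)$ for $h\in\prod_{\psi\in\mathcal{H}_c(B)}\L_{\#\psi(B)}$ and $\chi\in\mathcal{H}_c(A)$.

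Then the verification is a one-line substitution of definitions. Fix $f\in B$ and $\chi\in\mathcal{H}_c(A)$. On one side,
\[
\mathcal{F}(\mathcal{H}(\varphi))(\varepsilon_B(f))(\chi)=\varepsilon_B(f)(\chi\circ\varphi)=(\chi\circ\varphi)(f)=\chi(\varphi(f)),
\]
using only the definitions of $\mathcal{F}(\mathcal{H}(\varphi))$ and of $\varepsilon_B$; on the other side, directly from the definition of $\varepsilon_A$, $\varepsilon_A(\varphi(f))(\chi)=\chi(\varphi(f))$. Since $\chi\in\mathcal{H}_c(A)$ was arbitrary, $\mathcal{F}(\mathcal{H}(\varphi))(\varepsilon_B(f))=\varepsilon_A(\varphi(f))$, which is exactly the asserted commutativity. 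Together with the fact that every $\varepsilon_A$ is an isomorphism, this shows that $\varepsilon=(\varepsilon_A)_A$ is a natural isomorphism from $\mathrm{id}_{\mathbb{CHMV}^{op}}$ to $\mathcal{F}\circ\mathcal{H}$.

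The only point demanding any care — the ``main obstacle'', such as it is — is the bookkeeping forced by the opposite category: one must keep straight that a morphism $\varphi\colon A\to B$ in $\mathbb{CHMV}^{op}$ is a continuous homomorphism $\varphi\colon B\to A$, that $\mathcal{H}$, being defined on $\mathbb{CHMV}^{op}$, is covariant and turns $\varphi$ into $\mathcal{H}(\varphi)\colon\mathcal{H}_c(A)\to\mathcal{H}_c(B)$ by precomposition, and that $\mathcal{F}$ then reverses the direction again, so that the displayed square indeed lives in $\mathbb{CHMV}$ and commutes in the stated sense. Beyond that, the argument is purely formal.
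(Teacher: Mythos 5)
Your proposal is correct and follows essentially the same route as the paper: both reduce the theorem to the naturality square and verify it by the same element-wise computation $\mathcal{F}(\mathcal{H}(\varphi))(\varepsilon_B(f))(\chi)=\varepsilon_B(f)(\chi\circ\varphi)=(\chi\circ\varphi)(f)=\chi(\varphi(f))=\varepsilon_A(\varphi(f))(\chi)$, relying on the preceding proposition for $\varepsilon_A$ being an isomorphism. Your extra remarks on the variance bookkeeping in $\mathbb{CHMV}^{op}$ are accurate but not needed beyond what the paper records.
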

\begin{proof}
Let $f\in B$ and $\chi \in \mathcal{H}_c(A)$, then
$$
\begin{aligned}
\mathcal{F}(\mathcal{H}(\varphi))(\varepsilon_B(f))(\chi)&=\varepsilon_B(f)(\mathcal{H}(\varphi)(\chi))\\
&=\varepsilon_B(f)(\chi\circ \varphi)\\
&=(\chi\circ \varphi)(f)\\
&=\chi(\varphi(f))\\
&=\varepsilon_A(\varphi(f))(\chi)
\end{aligned}
$$
Hence, $\mathcal{F}(\mathcal{H}(\varphi))(\varepsilon_B(f))=\varepsilon_A(\varphi(f))$ for all $f\in B$, as desired.
\end{proof}
Combining Theorem \ref{eq1} and Theorem \ref{eq2}, we obtain the anticipated duality.
\begin{cor}\label{main2}
The category $\mathbb{EM}$ of G-multisets and their morphisms is dually equivalent to the category $\mathbb{CHMV}$ of CHMV-algebras and continuous homomorphisms.
\end{cor}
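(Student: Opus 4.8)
The plan is to derive the dual equivalence from the standard categorical fact that if $F\colon\mathcal{C}\to\mathcal{D}$ and $G\colon\mathcal{D}\to\mathcal{C}$ are functors equipped with natural isomorphisms $G\circ F\cong\mathrm{id}_{\mathcal{C}}$ and $F\circ G\cong\mathrm{id}_{\mathcal{D}}$, then $F$ and $G$ are mutually quasi-inverse equivalences of categories. I would apply this with $\mathcal{C}=\mathbb{CHMV}^{op}$, $\mathcal{D}=\mathbb{EM}$, $F=\mathcal{H}$ and $G=\mathcal{F}$, the two functors constructed in this section.

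The first task is to recognize the two required natural isomorphisms. For $\eta\colon\mathrm{id}_{\mathbb{EM}}\Rightarrow\mathcal{H}\circ\mathcal{F}$: each component $\eta_X$ is an isomorphism of $\mathbb{EM}$ by Proposition \ref{nattrans1}, and, given a morphism $\varphi\colon\langle X,\sigma\rangle\to\langle Y,\mu\rangle$, the naturality square commutes precisely because $\mathcal{H}(\mathcal{F}(\varphi))(\eta_X(x))=\eta_Y(\varphi(x))$ for all $x\in X$, which is the content of Theorem \ref{eq1}. Dually, for $\varepsilon\colon\mathrm{id}_{\mathbb{CHMV}^{op}}\Rightarrow\mathcal{F}\circ\mathcal{H}$: each $\varepsilon_A$ is an isomorphism in $\mathbb{CHMV}^{op}$ by the proposition establishing that $\varepsilon_A$ is an isomorphism, and the naturality square commutes because $\mathcal{F}(\mathcal{H}(\varphi))(\varepsilon_B(f))=\varepsilon_A(\varphi(f))$ for every morphism $\varphi$ of $\mathbb{CHMV}^{op}$ and every $f\in B$, which is Theorem \ref{eq2}.

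With both natural isomorphisms in hand, the cited categorical fact yields that $\mathcal{H}$ and $\mathcal{F}$ are mutually quasi-inverse equivalences between $\mathbb{CHMV}^{op}$ and $\mathbb{EM}$; in particular $\mathbb{CHMV}^{op}\simeq\mathbb{EM}$, which is by definition the assertion that $\mathbb{CHMV}$ and $\mathbb{EM}$ are dually equivalent.

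Since every step is simply an invocation of results proved earlier in the section, there is no genuine obstacle here. The only point worth a moment's care is confirming that the component-wise isomorphism statements (the two propositions) and the naturality-square statements (Theorems \ref{eq1} and \ref{eq2}), taken together, really do package $\eta$ and $\varepsilon$ as natural isomorphisms; once this bookkeeping is done the conclusion is formal.
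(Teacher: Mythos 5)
Your proposal is correct and follows exactly the route the paper takes: the corollary is obtained by combining the componentwise isomorphism statements for $\eta_X$ and $\varepsilon_A$ with the naturality squares of Theorems \ref{eq1} and \ref{eq2}, and then invoking the standard fact that mutually quasi-inverse functors yield an equivalence (here between $\mathbb{EM}$ and $\mathbb{CHMV}^{op}$). No further comment is needed.
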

\begin{rem}
The category $\mathbf{StoneMV}$ of Stone MV-algebras and continuous homomorphisms is a full subcategory of $\mathbb{CHMV}$ and the category $\mathbb{M}$ of multisets is a full subcategory of $\mathbb{EM}$. The restriction of the equivalence of Corollary \ref{main2} to $\mathbf{StoneMV}$  yields a dual equivalence between $\mathbf{StoneMV}$ and $\mathbb{M}$, which is \cite[Theorem 3.4]{jbn}.
\end{rem}
\begin{rem}
Note that for every MV-algebra $A$, one can associate the pair $\langle \text{Max}(A), \sigma_A\rangle$, where $\sigma_A:\text{Max}(A)\to \text{Sub}([0,1])$ is defined by $\sigma_A(M)=A/M$. On the category of locally weakly finite MV-algebras, this construction leads to a categorical equivalence between the category of locally weakly finite MV-algebras and the dual category of real multisets \cite{CM}. When $A$ is a CHMV-algebra, then it is known (see for e.g., \cite[Lemma 3.2]{Mund}) that for every maximal ideal $M$ of $A$, either $A/M\cong \L_n$ for some $n\geq 2$ or $A/M\cong [0,1]$. On can identify $\sigma_A$ to a map $\sigma_A:\text{Max}A\to \overline{\mathbb{N}}$. Therefore, the subcategory of CHMV-algebras described in Proposition \ref{weak}, and every principal maximal ideal $M$, the multiplicity (at $M$) of the real multiset treated in \cite{CM} coincide with the multiplicity of the extended multiset treated here.
\end{rem}
\section{Some properties of $\mathbb{CHMV}$: Urysohn-Strauss's lemma and projectivity}
Among the numerous results that are characteristics of compact Hausdorff topological spaces, the Urysohn-Strauss lemma and Gleason's Theorem are some of the most popular and well-known. In this section, we explore these results in the context of topological MV-algebras.\par
Recall that the Urysohn-Strauss's lemma for distributive compact (complete) Hausdorff topological lattices asserts that if $a\nleq b$ in such a lattice $D$, then there exists a continuous homomorphism $\varphi: D\to [0,1]$ such that $\varphi(a)=1$ and $\varphi(b)=0$\cite[Lemma VII.1.14]{Jo}. We show that the only CHMV-algebras for which the Urysohn-Strauss's lemma holds are compact Hausdorff Boolean algebras or powersets\cite{GH}.
\begin{prop}\label{us}
 The Urysohn-Strauss's lemma holds in a CHMV-algebra $A$ if and only if $A\cong \mathcal{P}(X)$ for some set $X$.
\end{prop}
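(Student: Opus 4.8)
The plan is to reduce everything to a description of the continuous homomorphisms from a CHMV-algebra into $[0,1]$. Write $A=\prod_{x\in X}\L_{n_x}$ with $n_x\in\overline{\mathbb{N}}$. First I would record the claim that a homomorphism $\chi:A\to[0,1]$ is continuous if and only if $\chi=p_y$ for some $y\in X$, where the projection $p_y:A\to\L_{n_y}$ is viewed as a map into $[0,1]$. Indeed, if $\chi$ is continuous then $\ker\chi=\chi^{-1}(\{0\})$ is closed, hence compact since $A$ is compact, so by Proposition \ref{principal} it is a principal maximal ideal; thus $\ker\chi=M_y=\ker p_y$ for a unique $y$, and since an MV-homomorphism into $[0,1]$ is determined by its kernel we get $\chi=p_y$. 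Conversely every projection is continuous. (Alternatively, this is Proposition \ref{comp-h} taken with $B=[0,1]$.) So the projections $p_y$ are the only possible separating homomorphisms.

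For the ``if'' direction, suppose $A\cong\mathcal{P}(X)$, so $A=\{0,1\}^X$. Given $a,b\in A$ with $a\nleq b$, there is $x_0\in X$ with $a(x_0)=1$ and $b(x_0)=0$, and then $p_{x_0}$ is a continuous homomorphism into $[0,1]$ with $p_{x_0}(a)=1$ and $p_{x_0}(b)=0$; hence the Urysohn--Strauss lemma holds in $A$. For the ``only if'' direction, assume the Urysohn--Strauss lemma holds in $A$ and suppose, for contradiction, that $n_{x_0}\geq 3$ for some $x_0\in X$ (this includes $n_{x_0}=\infty$). Choose $c\in\L_{n_{x_0}}$ with $0<c<1$ and let $a\in A$ be the element equal to $c$ at $x_0$ and to $0$ elsewhere. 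Then $a\neq 0$, so $a\nleq 0$, and by hypothesis there is a continuous homomorphism $\varphi:A\to[0,1]$ with $\varphi(a)=1$. By the previous paragraph $\varphi=p_y$ for some $y\in X$, so $\varphi(a)=a(y)\in\{0,c\}$, contradicting $0<c<1$. Therefore $n_x=2$ for every $x\in X$, i.e. $A=\{0,1\}^X\cong\mathcal{P}(X)$.

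I do not expect a genuine obstacle here: the only step with content is the identification of the continuous homomorphisms into $[0,1]$ in the first paragraph, and that is already available from Proposition \ref{principal} (equivalently Proposition \ref{comp-h}). The one point that must be handled with care is logical rather than mathematical --- ``the Urysohn--Strauss lemma holds in $A$'' has to be read as quantified over \emph{all} pairs $a\nleq b$, so producing the single bad pair $(a,0)$ above, with $a$ carrying one non-Boolean coordinate, is enough to refute it.
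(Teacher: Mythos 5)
Your proof is correct and follows essentially the same route as the paper's: both identify the continuous homomorphisms $A\to[0,1]$ with the coordinate projections (via Proposition \ref{principal}/\ref{comp-h} and the fact that a homomorphism into $[0,1]$ is determined by its kernel), then refute the lemma for a non-Boolean $A$ using an element supported at a single coordinate with a value strictly between $0$ and $1$, and verify the Boolean case directly with a projection. No substantive differences to report.
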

\begin{proof}
Let $A=:\prod_{x\in X}\L_{n_x}$ be a CHMV-algebra and $\varphi:A\to [0,1]$ a continuous homomorphism. Then by Proposition \ref{comp-h} $\ker \varphi=\varphi^{-1}(\{0\})$ is a principal maximal ideal of $A$. It follows as in the surjectivity argument of the proof of Proposition \ref{nattrans1} that $\varphi=p_x$ for some unique $x\in X$. In other words, the only continuous homomorphisms from $A\to [0,1]$ are the natural projections. Suppose that $A$ is not a Boolean algebra, then there exists $x_0\in X$ and $0<t_0<1$ in $\L_{n_{x_0}}$. Now, if one considers $f\in A$ defined by $f(x)=0$ if $x\ne x_0$ and $f(x_0)=t_0$, then $f \nleq 0$. But by the observation above, for every continuous homomorphism $\varphi$ from $A\to [0,1]$, $\varphi(f)=0$ or $t_0$. Therefore there does not exists a continuous homomorphism from $A\to [0,1]$ sending $f$ to $1$. \par
Conversely suppose that $f\nleq g$ in $\mathbf{2}^X$. Then there exists $x_0\in X$ such that $f(x_0)=1$ and $g(x_0)=0$. Consider $p_{x_0}:\mathbf{2}^X\to 2$ the projection onto the $x_0^{th}$ factor. Then $p_{x_0}$ is a continuous homomorphism satisfying $p_{x_0}(f)=1$ and $p_{x_0}(g)=0$. 
\end{proof}
Now we turn our attention to projective objects in the category of compact Hausdorff topological spaces and and also to extremally disconnected topological MV-algebras. The consideration of these classes is motivated by the fact that for compact Hausdorff topological spaces, the two classes coincide as proved by Gleason \cite{GL}. \par In the case of topological MV-algebras, the extremally disconnected topological MV-algebras are finite MV-algebras with discrete topology as we prove next.
\begin{prop}\label{extd}
The extremally disconnected topological MV-algebras are finite MV-algebras with discrete topology.
\end{prop}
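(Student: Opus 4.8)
The plan is to reduce to the structural description of compact Hausdorff MV-algebras and then invoke one elementary topological fact about extremally disconnected spaces. First I would record a preliminary observation: in a Hausdorff extremally disconnected space any two disjoint open sets have disjoint closures --- if $V$ and $W$ are disjoint and open, then $W$ misses the open set $\overline V$, hence so does its closure $\overline W$ --- and consequently such a space admits no nontrivial convergent sequences. Indeed, were $(y_k)$ a sequence converging to $y$ with infinitely many terms distinct from $y$, one could thin it to a pairwise-distinct sequence and then, by a routine Hausdorff separation, split it into two disjoint open sets each containing a tail of a subsequence, so that $y$ lies in both closures, contradicting the first remark.

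Now let $A$ be an extremally disconnected MV-algebra carrying a compact Hausdorff topology (the hypothesis must include compactness, since the discrete topology on any infinite MV-algebra is already extremally disconnected). By the structure theorem [\cite[Theorem 2.2]{jbn2}, \cite[Theorem 2.5]{web}] we may write $A=\prod_{x\in X}\L_{n_x}$ with $n_x\in\overline{\mathbb N}$, the topology being the (Hausdorff) product topology. Assume $A$ is infinite; then either some factor is infinite, say $n_{x_0}=\infty$, or the index set $X$ is infinite. In the first case let $f_k\in A$ be the element with value $1/k$ at the coordinate $x_0$ and $0$ elsewhere; in the second case choose pairwise-distinct $x_1,x_2,\dots\in X$ and let $f_k\in A$ have value $1$ at $x_k$ and $0$ elsewhere. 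In either case the $f_k$ are pairwise distinct and all different from the zero element $\mathbf 0$, yet $f_k\to\mathbf 0$: a basic open neighbourhood of $\mathbf 0$ constrains only finitely many coordinates, and these are eventually met along the chosen sequence. This contradicts the preliminary observation.

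Hence $A$ must be finite, and then its topology --- the product of finitely many finite subspaces of $[0,1]$, each of which is discrete --- is discrete; equivalently, any finite Hausdorff space carries the discrete topology. This yields the claim. I expect the main obstacle to lie in the preliminary observation, specifically the Hausdorff separation that produces the two disjoint open sets with the common limit in both closures; that is the step I would write out in full, the rest being bookkeeping with the product representation. As an alternative to the convergent-sequence argument one can instead exhibit directly an open subset of $\prod_{x\in X}\L_{n_x}$ whose closure fails to be open: either $p_{x_0}^{-1}((0,1/2))$ when $n_{x_0}=\infty$, whose closure equals $p_{x_0}^{-1}([0,1/2])$ and cannot be open because $A$ is compact, or the classical pair of disjoint open sets showing that the Cantor space $2^{\omega}$ --- which sits in $A$ as a closed subspace when $X$ is infinite --- is not extremally disconnected.
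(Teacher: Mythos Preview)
Your argument is correct and follows the same strategy as the paper's: exhibit a nontrivial convergent sequence in any infinite compact Hausdorff MV-algebra, contradicting the fact (which the paper cites as \cite[Theorem~1.3]{GL} and you sketch directly) that Hausdorff extremally disconnected spaces admit none. The only difference is cosmetic---the paper first uses total disconnectedness to reduce to the Stone case (all $n_x$ finite) and then builds a single sequence, while you skip that reduction and treat the case $n_{x_0}=\infty$ with a separate sequence; you also rightly flag that compactness is an implicit hypothesis in the statement. One caution on your closing alternative: extremal disconnectedness is not inherited by closed subspaces in general, so merely embedding $2^{\omega}$ as a closed subspace of $A$ does not by itself give a contradiction---you would need it as a retract, which is only available once the relevant factors are finite (i.e.\ after the first case has been handled).
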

\begin{proof}
It is clear that any finite MV-algebra with discrete topology is a compact Hausdorff topological MV-algebra that is extremally disconnected.\\
Suppose that $A$ is a CHMV-algebra that is extremally disconnected. Then $A$ is a Stone MV-algebra since an extremally disconnected space is totally disconnected and by \cite[Theorem 2.3]{jbn2}, $A\cong \prod_{x\in X}\L_{n_x}$ with $2\leq n_x<\infty$ for all $x\in X$. By contradiction, assume that $X$ is infinite.  Then $X$ contains a copy of $\mathbb{N}$ which we identify with $\mathbb{N}$. Note that $A\cong \prod_{x\in \mathbb{N}}A_x\times \prod_{x\in X\setminus \mathbb{N}}A_x$, where each $A_x$ is a finite MV-chain. For each integer $n\geq 1$, define $f_n\in A$ by:
$$f_n(x)=
\left\{\begin{array}{ll}
  1 & ,\ \ \mbox{if} \ \ x\in \mathbb{N}\; \;\mbox{and} \; \; x\leq n\; \; \mbox{or}\; \; x\notin \mathbb{N}\\
 0&  ,\ \ \mbox{otherwise} 
\end{array}\right.$$
We claim that the sequence $(f_n)_n$ converges to $f\in A$, where $f(x)=1$ for all $x\in X$. Let $\mathcal{U}$ be a basis open set of $A$ with $f\in \mathcal{U}$. Write $\mathcal{U}=\prod_{x\in X}\mathcal{U}_x$ and assume that there exists a finite subset $F$ of $X$ such that $\mathcal{U}_x=A_x$ for all $x\notin F$. Since $f\in \mathcal{U}$, then $1\in \mathcal{U}_x$ for all $x\in F$. Note that if $F\subseteq X\setminus \mathbb{N}$, then $f_n\in \mathcal{U}$ for all $n\geq 1$. If $F\cap \mathbb{N} \ne \emptyset$, let $N=\text{Max}\{x\in \mathbb{N}:\mathcal{U}_x\ne A_x\}$. Then, $f_n\in \mathcal{U}$ for all $n\geq N$. Therefore, $(f_n)_n$ is a non-stationary sequence in $A$ that converges to $f$. Whence, $A$ is not extremally disconnected by \cite[Theorem. 1.3]{GL}.
\end{proof}
\begin{lem}\label{ontohomo}
Let $C:=\prod_{x\in X}\L_{n_x}$ and $B:=\prod_{y\in Y}\L_{m_y}$ and $\psi:C\to B$ be a continuous epimorphism. 
Then,\\
(1) For every $y\in Y$, there exists a unique $x_y\in X$ such that $\L_{n_{x_y}}=\L_{m_y}$;\\
(2) The map $\tau:Y\to X$ defined by $\tau(y)=x_y$ is one-to-one and ;\\
(3) For every $f\in C$ and $y\in Y$, $\psi(f)(y)=f(x_y)$.
\end{lem}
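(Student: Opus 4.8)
The plan is to obtain all three parts as consequences of Proposition~\ref{comp-h} applied to the continuous homomorphism $\psi\colon C\to B$. Since $\psi$ is continuous, the implication $(4)\Rightarrow(1)$ of that proposition gives, for each $y\in Y$, a unique $x_y\in X$ with $q_y\circ\psi=p_{x_y}$, where $p_x\colon C\to\L_{n_x}$ and $q_y\colon B\to\L_{m_y}$ are the natural projections. This $x_y$ is the index claimed in (1). To pass from the mere existence of $x_y$ to the equality of chains I would use that $\psi$ is an epimorphism: then $q_y\circ\psi$ is surjective onto $\L_{m_y}$ (being a composite of the surjections $\psi$ and $q_y$), while $p_{x_y}$ is surjective onto $\L_{n_{x_y}}$; equality of the two maps forces $\L_{m_y}=\L_{n_{x_y}}$, i.e.\ $n_{x_y}=m_y$. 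In the write-up I would stress that the uniqueness in (1) is the uniqueness of the index realizing $q_y\circ\psi=p_{x_y}$, not of an index merely satisfying $n_x=m_y$ (there may well be several of the latter); this is precisely what makes the map $\tau$ in (2) well defined.

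Part (3) is then immediate: for $f\in C$ and $y\in Y$,
\[
\psi(f)(y)=q_y\bigl(\psi(f)\bigr)=(q_y\circ\psi)(f)=p_{x_y}(f)=f(x_y).
\]

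For part (2), suppose $\tau(y_1)=\tau(y_2)=:x$. Then $q_{y_1}\circ\psi=p_x=q_{y_2}\circ\psi$, and by (1) the projections $q_{y_1},q_{y_2}$ share the codomain $\L_{m_{y_1}}=\L_{n_x}=\L_{m_{y_2}}$. Since $\psi$ is surjective it is right-cancellable, so $q_{y_1}=q_{y_2}$ as maps on $B$. If $y_1\ne y_2$, choosing $g\in B$ with $g(y_1)=0$ and $g(y)=1$ for $y\ne y_1$ (which uses only that every $\L_{m_y}$ has at least two elements) gives $q_{y_1}(g)=0\ne 1=q_{y_2}(g)$, a contradiction; hence $y_1=y_2$ and $\tau$ is one-to-one.

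I do not anticipate a genuine obstacle here: the substance is already packaged in Proposition~\ref{comp-h}, and the rest is bookkeeping. The two points needing a little care are (a) the correct reading of ``unique'' in (1), as explained above, and (b) keeping track of which hypothesis does what: continuity is what produces the index $x_y$, while surjectivity of $\psi$ is what upgrades this to the equality $n_{x_y}=m_y$ in (1) and supplies the right-cancellability used for the injectivity of $\tau$ in (2).
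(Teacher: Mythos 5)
Your proposal is correct and follows essentially the same route as the paper: all three parts are derived from Proposition~\ref{comp-h} applied to the continuous map $\psi$, with surjectivity of $\psi$ used to upgrade $q_y\circ\psi=p_{x_y}$ to the equality of chains in (1) and to obtain the injectivity of $\tau$ in (2) (the paper cancels $\psi$ via $\psi(\psi^{-1}(M_y))=M_y$ rather than by right-cancellability of $q_{y_1}\circ\psi=q_{y_2}\circ\psi$, but this is the same idea). Part (3) is handled identically in both.
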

\begin{proof}For $x\in X$ and $y\in Y$, $p_x:C\to \L_{n_x}$ and $q_y:B\to \L_{m_y}$ denote the natural projections.\\
(1) Let $y\in Y$, then by Proposition \ref{comp-h},  there exists a unique $x_y\in X$ such that $q_y\circ \psi=p_{x_y}$. So, $(q_y\circ \psi)(C)=p_{x_y}(C)$ and since $\psi(C)=B$, then $\L_{n_{x_y}}=\L_{m_y}$.\\
(2) Observe from the proof of Proposition \ref{comp-h} for every $y\in Y$, $\psi^{-1}(M_y)=M_{\tau(y)}$. It follows that if $\tau(y)=\tau(y')$, then $\psi^{-1}(M_y)=\psi^{-1}(M_y')$. Hence as $\psi$ is onto, $M_y=\psi(\psi^{-1}(M_y))=\psi(\psi^{-1}(M_y'))=M_y'$. Therefore $M_y=M_y'$ and $y=y'$ as needed.\\
(3) In addition, since $q_y\circ \psi=p_{x_y}$, then $(q_y\circ \psi)(f)=p_{x_y}(f)$ for all $f\in C$. Hence,  $\psi(f)(y)=f(x_y)$  for all $f\in C$ as stated.
\end{proof}
The next result completely describe the projective compact Hausdorff topological MV-algebras.
\begin{thm}\label{proj}
Let $A$ be a compact Hausdorff MV-algebra. The following assertions are equivalent.
\begin{enumerate}
\item $A$ is projective in $\mathbb{CHMV}$;
\item $A\cong \L_2\times A'$ for some CHMV-algebra $A'$;
\item Hom$_{\mathbb{CHMV}}(A,B)\ne \emptyset$ for all $B$ in $\mathbb{CHMV}$.
\end{enumerate}
\end{thm}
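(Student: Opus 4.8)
The plan is to establish the implications $(1)\Rightarrow(2)$, $(2)\Rightarrow(1)$, $(2)\Rightarrow(3)$ and $(3)\Rightarrow(2)$, which together yield the equivalence of all three conditions. Write $A=\prod_{x\in X}\L_{n_x}$. Two elementary observations carry most of the argument. First, for every $n\in\overline{\mathbb{N}}$ the map $\L_2\hookrightarrow\L_n$ sending $0\mapsto 0$ and $1\mapsto 1$ is a continuous MV-homomorphism; hence, if $A\cong\L_2\times A'$, then for any CHMV-algebra $B=\prod_{y\in Y}\L_{m_y}$ the family $(h_y)_{y\in Y}$ with $h_y\colon A\twoheadrightarrow\L_2\hookrightarrow\L_{m_y}$ defines a continuous homomorphism $A\to B$. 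Second, a continuous homomorphism $\varphi\colon A\to\L_2$ exists if and only if $n_{x_0}=2$ for some $x_0\in X$: the projection $p_{x_0}$ gives one direction, and for the converse Proposition \ref{comp-h} (applied with the CHMV-algebra $\L_2$) yields $\ker\varphi=\ker p_{x_0}$ for some $x_0$, while $\varphi$ is automatically onto since $\L_2$ has no proper subalgebra, so that $\L_{n_{x_0}}\cong A/\ker p_{x_0}\cong\L_2$.

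Granting these, $(3)\Rightarrow(2)$ is immediate: apply $(3)$ with $B=\L_2$ to get a continuous homomorphism $A\to\L_2$, forcing some $n_{x_0}=2$ and hence $A\cong\L_2\times\prod_{x\neq x_0}\L_{n_x}$; and $(2)\Rightarrow(3)$ is exactly the first observation. For $(1)\Rightarrow(2)$ I argue by contraposition: if $A\not\cong\L_2\times A'$ then $n_x\neq 2$ for all $x$, so by the second observation there is no continuous homomorphism $A\to\L_2$, and therefore the surjective continuous homomorphism $\pi\colon A\times\L_2\to A$ (first projection) admits no section in $\mathbb{CHMV}$, since a section composed with the projection onto the $\L_2$-factor would be such a homomorphism. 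As a projective object splits every epimorphism onto it, $A$ is not projective.

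The substantive implication is $(2)\Rightarrow(1)$, whose engine is Lemma \ref{ontohomo}: every continuous epimorphism of CHMV-algebras is a product projection up to isomorphism. Explicitly, given a continuous epimorphism $\psi\colon C\to B$ and a continuous homomorphism $g\colon A\to B$, write $C=\prod_{x'\in X'}\L_{n'_{x'}}$ and $B=\prod_{y\in Y}\L_{m_y}$; Lemma \ref{ontohomo} furnishes an injection $\tau\colon Y\hookrightarrow X'$ with $\L_{n'_{\tau(y)}}=\L_{m_y}$ and $\psi(f)(y)=f(\tau(y))$, so reindexing the factors of $C$ produces a topological isomorphism $\theta\colon C\xrightarrow{\sim}B\times D$ with $D=\prod_{x'\in X'\setminus\tau(Y)}\L_{n'_{x'}}$, under which $\psi$ becomes the first projection. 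Since $A\cong\L_2\times A'$, the first observation gives a continuous homomorphism $c\colon A\to D$ (componentwise $A\twoheadrightarrow\L_2\hookrightarrow\L_{n'_{x'}}$, or the unique map when $D$ is trivial), and then $\widetilde h:=\theta^{-1}\circ\langle g,c\rangle\colon A\to C$ is a continuous homomorphism with $\psi\circ\widetilde h=g$, so $A$ is projective.

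I expect $(2)\Rightarrow(1)$ to be the main obstacle: the key realization is that, by Lemma \ref{ontohomo}, an arbitrary continuous epimorphism of CHMV-algebras peels off a complementary direct factor $D$, so the only thing missing for a lift is a continuous homomorphism $A\to D$. A general CHMV-algebra need not admit any such homomorphism ($\prod_{x}\L_3$, for instance, has no continuous homomorphism to $\L_2$), and it is precisely an $\L_2$ summand in $A$, together with the embeddability of $\L_2$ in every complete MV-chain, that guarantees one. The whole statement can alternatively be read off from the duality of Corollary \ref{main2}, where it amounts to the more transparent fact that the injective objects of $\mathbb{EM}$ are exactly the e-multisets having a point of multiplicity $1$; the direct argument above is essentially a translation of that.
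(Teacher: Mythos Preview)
Your proof is correct and follows essentially the same route as the paper: the same cycle of implications, the same appeals to Proposition~\ref{comp-h} and Lemma~\ref{ontohomo}, and the same key idea that an $\L_2$ factor in $A$ supplies the ``filler'' map into the complementary factor of a continuous epimorphism. Your $(2)\Rightarrow(1)$ is a slightly more conceptual repackaging---reindexing $C$ as $B\times D$ so that the lift is visibly $\theta^{-1}\circ\langle g,c\rangle$---of what the paper writes out coordinate by coordinate, but the underlying construction is identical.
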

\begin{proof}
$(1)\Rightarrow (2)$: Suppose that $A:=\prod_{x\in X}\L_{n_x}$ is projective in $\mathbb{CHMV}$. Consider $\psi:A\times \L_2\to A$, which is a continuous epimorhism and $1_A:A\to A$, which is the identity homomorphism. Then by the projectivity of $A$, there exists $\varphi:A\to A\times \L_2$ such that $\psi\circ \varphi=1_A$. In particular, $\pi_2\circ \varphi:A\to \L_2$ is a continuous homomorphism, which must be onto. By Proposition \ref{comp-h}, there exists $x_0\in X$ such that $\pi_2\circ \varphi=p_{x_0}$. Hence $\L_2=(\pi_2\circ \varphi)(A)=p_{x_0}(A)=\L_{n_{x_0}}$. Thus, $A\cong \L_2\times A'$ where $A'=\prod_{x\in X\setminus\{x_0\}}\L_{n_x}$.\\
$(2)\Rightarrow (1)$: We shall prove that for every CHMV-algebra $A'$, $A:=\L_2\times A'$ is projective in $\mathbb{CHMV}$. To see this, consider the diagram 
\[
\xymatrix{
  & A \ar@{.>}[dl] \ar[d]^{\varphi} \\
C \ar@{->>}[r]^\psi & B & 
}
\]
in $\mathbb{CHMV}$, where $\varphi:A\to B:=\prod_{y\in Y}\L_{m_y}$ is a continuous homomorphism and $\psi: C:=\prod_{x\in X}\L_{n_x}\to B$ a continuous epimorphism. Write $A:=\L_2\times A'=\prod_{s\in S}\L_{d_s}$, where for some fixed $s_0\in S$, $d_{s_0}=2$. \\
Define $\overline{\varphi}:A\to C$ using the notations of Lemma \ref{ontohomo}  as follows. For every $f\in A$, $\overline{\varphi}(f)(x_y)=\varphi(f)(y)$ and $\overline{\varphi}(f)(x)=f(s_0)$ if $x\neq x_y$ for all $y\in Y$. We need show that $\overline{\varphi}$ is a continuous homomoprhism and $\psi\circ \overline{\varphi}=\varphi$.\\
(i) That $\overline{\varphi}$ is a homomorphism, follows from the definition of $\overline{\varphi}$ and the fact that $\varphi$ is a homomorphism.\\
(ii) To see that $\overline{\varphi}$ is continuous, we use Proposition \ref{comp-h} and show that $\overline{\varphi}$ reflects principal maximal ideals. Let $x\in X$, we check that $\overline{\varphi}^{-1}(M_x)$ is principal. Using the various definitions, it is easy to show that if $x=x_y$ for some $y\in Y$, then $\overline{\varphi}^{-1}(M_x)=\varphi^{-1}(M_y)$, which is a principal maximal ideal of $A$ as $\varphi$ is continuous. On the other hand, if $x\ne x_y$ for all $y\in Y$, then $\overline{\varphi}^{-1}(M_x)=\{0\}\times A'$, which is a principal maximal ideal of $A$.\\
(iii) To see that $\psi\circ \overline{\varphi}=\varphi$, let $f\in A$ and $y\in Y$, then by Lemma \ref{ontohomo}, $(\psi\circ \overline{\varphi})(f)(y)=\psi(\overline{\varphi}(f))(y)=\overline{\varphi}(f)(x_y)=\varphi(f)(y)$. Hence $\psi\circ \overline{\varphi}=\varphi$ as needed.\\
$(2)\Rightarrow (3)$: Let $B:=\prod_{y\in Y}\L_{m_y}$ be in  $\mathbb{CHMV}$, and consider $\varphi: \L_2\times A'\to B$ defined by $\varphi(t,f)(y)= t$ for all $y\in Y$. Then $\varphi$ is a continuous homomorphism.\\
$(3)\Rightarrow (2)$ By (3), there exists a continuous homomorphism from $A:=\prod_{x\in X}\L_{n_x}\to \L_2$. By Lemma \ref{ontohomo}(1), there exists $x_0\in X$ such that $\L_{x_0}=\L_2$. Therefore, $A\cong \L_2\times A'$ for some CHMV-algebra $A'$.
\end{proof}
\begin{cor}
Projective Stone MV-algebras, that is projective objects in $\mathbf{StoneMV}$ are exactly the Stone MV-algebras having the 2-element Boolean algebra as quotient. 
\end{cor}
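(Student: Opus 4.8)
The plan is to deduce this from Theorem~\ref{proj} by checking that its entire argument relativizes to the full subcategory $\mathbf{StoneMV}\subseteq\mathbb{CHMV}$. Recall that, by \cite[Theorem 2.3]{jbn2}, a CHMV-algebra is a Stone MV-algebra precisely when all of its MV-chain factors are finite; in particular $\mathbf{StoneMV}$ is closed under finite products, and if $\prod_{x\in X}\L_{n_x}$ is a Stone MV-algebra then so is $\prod_{x\in X\setminus\{x_0\}}\L_{n_x}$. I first claim that a Stone MV-algebra $A$ is projective in $\mathbf{StoneMV}$ if and only if $A\cong\L_2\times A'$ for some Stone MV-algebra $A'$. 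For the backward implication, given a diagram in $\mathbf{StoneMV}$ with a continuous epimorphism $\psi\colon C\to B$ and a continuous homomorphism $\varphi\colon A\to B$, the lift $\overline{\varphi}\colon A\to C$ constructed in the proof of $(2)\Rightarrow(1)$ of Theorem~\ref{proj} takes values in $C$ and is a morphism of $\mathbb{CHMV}$; since $C$ lies in $\mathbf{StoneMV}$ and the latter is full, $\overline{\varphi}$ is already a morphism of $\mathbf{StoneMV}$, so $A$ is projective there. For the forward implication, note that both $A$ and $A\times\L_2$ are Stone MV-algebras, so the test epimorphism $\psi\colon A\times\L_2\to A$ and the identity $1_A$ both live in $\mathbf{StoneMV}$; applying the hypothesis gives a section $\varphi$, and then $\pi_2\circ\varphi\colon A\to\L_2$ is a continuous epimorphism, which by Proposition~\ref{comp-h} equals a projection $p_{x_0}$, forcing $\L_{n_{x_0}}=\L_2$ and hence $A\cong\L_2\times A'$ with $A'$ Stone.

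Next I would match the splitting condition with the quotient condition, where ``quotient'' is understood (as in Theorem~\ref{proj} and its preceding discussion) as ``continuous homomorphic image'', equivalently a quotient by a principal maximal ideal. If $A\cong\L_2\times A'$, the first projection is a continuous epimorphism onto $\L_2$, so the two-element Boolean algebra is a quotient of $A$ in $\mathbf{StoneMV}$. Conversely, a continuous epimorphism of $A=\prod_{x\in X}\L_{n_x}$ onto $\L_2$ yields, by Lemma~\ref{ontohomo}(1), an index $x_0$ with $\L_{n_{x_0}}=\L_2$; splitting off that factor exhibits $A\cong\L_2\times A'$ with $A'=\prod_{x\in X\setminus\{x_0\}}\L_{n_x}$ again a Stone MV-algebra. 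Combining these two equivalences with the claim of the previous paragraph yields the corollary.

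The only genuine point requiring care --- the ``main obstacle'', such as it is --- is the observation that projectivity in a full subcategory is a priori weaker than projectivity in the ambient category, so one must verify both that the lifting construction of Theorem~\ref{proj} never leaves $\mathbf{StoneMV}$ (it does not, since the lift lands in the given Stone MV-algebra $C$) and that the particular epimorphism used to force the splitting can be chosen inside $\mathbf{StoneMV}$ (it can, as $\mathbf{StoneMV}$ is closed under finite products). Everything else is a direct transcription of the proof of Theorem~\ref{proj} together with Lemma~\ref{ontohomo}.
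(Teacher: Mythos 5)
Your proof is correct and follows the route the paper intends: the corollary is stated without proof as a restriction of Theorem~\ref{proj} to the full subcategory $\mathbf{StoneMV}$, and your argument simply makes explicit the two points that need checking (the lift $\overline{\varphi}$ lands in the Stone MV-algebra $C$, and the test epimorphism $A\times\L_2\to A$ stays inside $\mathbf{StoneMV}$ since Stone MV-algebras are closed under finite products). Your attention to the fact that projectivity does not automatically transfer between a category and a full subcategory is exactly the right (and only) subtlety here.
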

Note that when this is pushed further down, one obtains that in the category of complete atomic Boolean algebras with complete homomorphism, every algebra is projective. This is not surprising given the duality stated in the first sentence of the introduction.\\
Since the categories $\mathbb{CHMV}$ and $\mathbb{EM}$ are dually equivalent, and finite MV-algebras correspond on the other side of this duality to finite multisets, the next result follows.
\begin{cor}
Injective objects in $\mathbb{EM}$ are exactly the E-multisets with at least one element of multiplicity $1$.
\end{cor}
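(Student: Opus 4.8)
\section*{Proof proposal}

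The plan is to transport the classification of projective objects of $\mathbb{CHMV}$ obtained in Theorem~\ref{proj} across the dual equivalence of Corollary~\ref{main2}. Recall the general categorical fact that an equivalence of categories preserves and reflects both injectivity and projectivity, and that a \emph{contravariant} equivalence (equivalently, an equivalence $\mathbb{EM}\to\mathbb{CHMV}^{op}$) interchanges these two notions, because it carries monomorphisms to epimorphisms and vice versa. Since the functor $\mathcal{F}\colon\mathbb{EM}\to\mathbb{CHMV}^{op}$ is an equivalence by Theorems~\ref{eq1} and~\ref{eq2}, this yields: a $\mathbb{EM}$-object $\langle X,\sigma\rangle$ is injective in $\mathbb{EM}$ if and only if $\mathcal{F}(\langle X,\sigma\rangle)=A_{X,\sigma}=\prod_{x\in X}\L_{\sigma(x)+1}$ is projective in $\mathbb{CHMV}$.

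Next I would invoke Theorem~\ref{proj}: the CHMV-algebra $A_{X,\sigma}$ is projective in $\mathbb{CHMV}$ if and only if $A_{X,\sigma}\cong\L_2\times A'$ for some CHMV-algebra $A'$. By the uniqueness of the representation of a CHMV-algebra as a product of complete MV-chains (Corollary~\ref{uniquerep}), such an isomorphism holds exactly when $\sigma(x_0)+1=2$, i.e. $\sigma(x_0)=1$, for some $x_0\in X$. Chaining the two equivalences gives precisely the assertion: $\langle X,\sigma\rangle$ is injective in $\mathbb{EM}$ if and only if it has at least one element of multiplicity $1$.

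There is no genuine computation involved; the only step requiring a moment of care is the first one, namely that under $\mathcal{F}$ the monomorphisms and epimorphisms of $\mathbb{EM}$ are swapped with the epimorphisms and monomorphisms of $\mathbb{CHMV}$, so that injectivity in $\mathbb{EM}$ really does translate to projectivity in $\mathbb{CHMV}$. This is automatic for any equivalence (a covariant equivalence preserves monos and epis, and passing to the opposite category then interchanges them), so no explicit description of the monos and epis of either category is needed. Alternatively, one could argue directly inside $\mathbb{EM}$ using condition~(3) of Theorem~\ref{proj}: its dual statement says an $\mathbb{EM}$-object $\langle X,\sigma\rangle$ is injective iff there is a morphism from every $\mathbb{EM}$-object into it, and the constant map to an element of multiplicity $1$ always furnishes such a morphism (since $1$ divides every natural number), while the converse is obtained by testing against a one-point multiset of multiplicity $1$. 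I expect the duality route above to be the cleaner write-up, as it makes the origin of the condition transparent.
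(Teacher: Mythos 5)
Your proposal is correct and follows the same route the paper intends: the paper states this corollary as an immediate consequence of the dual equivalence of Corollary~\ref{main2} together with the classification of projectives in Theorem~\ref{proj}, which is exactly the transport argument you carry out (with the useful extra detail of invoking Corollary~\ref{uniquerep} to translate ``$A_{X,\sigma}\cong\L_2\times A'$'' into ``some $\sigma(x_0)=1$''). Your alternative direct argument via the dual of condition~(3) of Theorem~\ref{proj} is also sound, but it is not needed.
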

\section{Conclusion and Final Remarks}
Using the description of principal maximal ideals of CHMV-algebras (Proposition \ref{principals} and \ref{principal}), various characterizations continuous were obtained(Proposition \ref{comp-h}). This set up the ground to establish a duality between the category of CHMV-algebras and continuous homomophisms and that of extended multisets (Corollary \ref{main2}), which is a generalization of multisets allowing infinite multiplicities that was introduced. We also obtained that the only CHMV-algebras for which the Uryson-Strauss Lemma holds are powerset Boolean algebras (Proposition \ref{us}). Finally, we determined all the extremally disconnected CHMV-algebras, which are finite MV-algebras (Proposition \ref{extd}) and also the projective CHMV-algebras, which are those with the 2-element Boolean algebra as factor (Theorem \ref{proj}). We anticipate exploring the extension of this study to a larger subclass of semisimple MV-algebras such as strictly semisimple MV-algebras (as defined in section 2) or even complete MV-algebras.

\end{document}